\renewcommand{\S}{\mathcal{S}}
\newcommand{\Z}{\mathbb{Z}}
\newcommand{\C}{\mathbb{C}}
\newcommand{\T}{\mathbb{T}}
\newcommand{\bs}{{\bf s}}
\newcommand{\bx}{{\bf x}}
\newcommand{\bt}{{\bf t}}
\renewcommand{\for}{\begin{eqnarray*}}
\newcommand{\mel}{\end{eqnarray*}}
\newcommand{\ten}{\otimes}
\newcommand{\pl}{\hspace{.1cm}}
\newcommand{\pll}{\hspace{.3cm}}
\newcommand{\ran}{\rangle}
\newcommand{\lan}{\langle}
\newcommand{\al}{\alpha}
\newcommand{\si}{\sigma}
\newcommand{\la}{\lambda}
\newcommand{\A}{{\mathcal A}}
\newcommand{\K}{{\mathcal K}}
\newcommand{\R}{\mathbb{R}}
\renewcommand{\i}{\subset}
\newcommand{\norm}[2]{\parallel \! #1 \! \parallel_{#2}}
\newcommand{\pd}[1]{\frac{\partial}{\partial {#1}}}
\newtheorem{lemma}{Lemma}[section]
\newtheorem{prop}[lemma]{Proposition}
\newtheorem{theorem}[lemma]{Theorem}
\newtheorem*{theorem*}{Theorem}
\newtheorem{cor}[lemma]{Corollary}
\newtheorem{rem}[lemma]{Remark}
\newcommand{\re}{\begin{rem}\rm}
\newcommand{\mar}{\end{rem}}
\newcommand{\bra}[1]{\langle{#1}|}
\newcommand{\ket}[1]{|{#1}\rangle}
\newcommand{\qd}{\end{proof}\vspace{0.5ex}}
\newcommand{\prf}{\begin{proof}[\bf Proof:]}
\newcommand{\xspace}{\hbox{\kern-2.5pt}}
\begin{document}
\title{Continuous perturbations of Noncommutative Euclidean spaces and tori}
\author{Li Gao}
\address{Department of Mathematics\\
University of Illinois, Urbana, IL 61801, USA} \email[Li Gao]{ligao3@illinois.edu}
\begin{abstract}
We prove a noncompact version of Haagerup and R{\o}rdam's result about continuous paths of the rotation $C^*$-algebras. It gives a continuous Moyal deformation of Euclidean plane. 
Moveover, the construction is generalized to noncommutative Euclidean spaces of dimension $d\ge 2$. As a corollary, we obtain Lip$^{\frac12}$ continuous maps for the generators of noncommutative $d$-tori.
\end{abstract}
\maketitle
\section{Introduction} The celebrated Heisenberg commutation relation, \[PQ-QP=-iI\pl,\]
where $I$ is the identity operator, plays an important role in quantum mechanics and the related mathematics. This commutation relation affiliates to the Moyal deformation of Euclidean plane. Let $d\ge 2$ and $\theta=(\theta_{jk})_{j,k=1}^{d}$ be a real skew-symmetric $d\times d$-matrix. The associated noncommutative Euclidean space (for a nonsingular $\theta$) is given by $d$ one-parameter unitary groups $u_1(t), u_2(t),\cdots, u_d(t)$ satisfying the following commutation relations
\begin{align*}
u_j(s)u_k(t)=e^{i st\theta_{jk} }u_k(t)u_j(s)\pl, \pl \forall \pl\pl s,t \in \R   \pl,
\end{align*}
for $j,k=1,2,\cdots, d$. This noncommutative space, also called Moyal plane, is a prototype of noncompact noncommutative manifolds (see e.g. \cite{moyalplane}). Moreover, interesting objects and structures from quantum physics have been studied on the noncommutative plane and noncommutative $\R^4$ (see e.g. \cite{instantons, nair01, smailagic03}).

Another class of fundamental examples in noncommutative geometry are the noncommutative tori. They have been extensively studied over decades (we refer to the survey paper by Rieffel \cite{Rieffel90} for the study before 90s, and \cite{xu12, Connes14, Elliott07} for more recent {development).} Recall that the noncommutative $d$-torus $A^d_\theta$ associated to a skew-symmetric matrix $\theta$ is the universal $C^*$-algebra generated by $d$ unitaries $u_1,u_2,\cdots,u_d$ subject to the commutation relations
\begin{align*}u_ju_k=e^{2\pi i \theta_{jk} }u_ku_j\pl, \pl j ,k=1, 2,\cdots ,d  \pl.
\end{align*}
It is clear from the definition that $A^d_\theta$ is a noncommutative deformation of $C(\T^d)$, the $\text{$C^*$-algebra}$ of continuous functions on a usual $d$-torus ($\theta=0$). When $d=2$, the commutation relations reduce to two unitaries $u,v$ satisfying
 \begin{align*}uv=e^{2\pi i \theta }vu \end{align*}
 for a real number $\theta$. The noncommutative $2$-tori are also called rotation $C^*$-algebras (cf. \cite{Davidson}).

In this paper, we will consider generalizations of the following result by Haagerup and R{\o}rdam in \cite{HR}.
\begin{theorem*}[Haargerup-R{\o}dam, $'95$]\label{HR}Let $H$ be an infinite dimensional Hilbert space and $U(H)$ be its unitary group. There exist two continuous paths ${u,v: [0,1] \to U(H)}$ and a universal constant $C>0$ such that $u(0)=u(1), v(0)=v(1)$, and
\begin{enumerate}
\item $u(\theta)v(\theta)=e^{2\pi i\theta}v(\theta) u(\theta)$;
\item $\max \{\norm{u(\theta)-u(\theta ')}{},\pl \norm{v(\theta)-v(\theta ')}{}\}\le  C|\theta-\theta '|^{\frac{1}{2}}$;
\end{enumerate}
 for all $\theta, \theta'\in [0,1]$.
\end{theorem*}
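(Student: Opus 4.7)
My plan is to realise the entire family $(A_\theta)$ inside a single $B(H)$ by a symmetric Weyl-type representation and then to truncate the underlying quantisation at a finite scale $N$, trading a small, explicit error for norm continuity while preserving the commutation relation exactly.

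Step 1: the model. The naive representation of $A_\theta$ on $L^2(\T)$ — $u$ acting as multiplication by $e^{2\pi i x}$ and $v(\theta)$ as translation by $\theta$ — fails from the outset, since translation is only strongly continuous and $\|v(\theta)-v(\theta')\|=2$ whenever $\theta-\theta'$ is irrational. I would instead work on $H=\ell^2(\Z)\otimes\ell^2(\Z)$ with the symmetric Weyl pair
\[
v(\theta)\,\delta_{(m,n)} = e^{-\pi i n\theta}\,\delta_{(m+1,n)}, \qquad
u(\theta)\,\delta_{(m,n)} = e^{\pi i m\theta}\,\delta_{(m,n+1)},
\]
which one checks directly satisfies $u(\theta)v(\theta)=e^{2\pi i\theta}v(\theta)u(\theta)$. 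This pair is unitary but still fails norm continuity in $\theta$, since the phase frequencies $m,n$ are unbounded and $\|u(\theta)-u(\theta')\|, \|v(\theta)-v(\theta')\|$ are both equal to $2$ generically.

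Step 2: block-truncation with compensation. For a scale $N\ge 1$ I would pass to a block-diagonal approximant on $H$ of block size of order $N$, replacing the unbounded phase factors by truncated analogues within each block, and gluing neighbouring blocks by a rank-small corrective unitary so that the exact commutation relation $uv=e^{2\pi i\theta}vu$ holds block-by-block. On each truncated block the dependence on $\theta$ is Lipschitz with constant of order $N$, while the operator-norm discrepancy between the approximant and the untruncated representation is of order $1/N$. Balancing the two by choosing $N\sim|\theta-\theta'|^{-1/2}$ yields
\[
\max\bigl\{\|u(\theta)-u(\theta')\|,\,\|v(\theta)-v(\theta')\|\bigr\}\;\le\;C|\theta-\theta'|^{1/2}.
\]
Periodicity $u(0)=u(1)$ and $v(0)=v(1)$ is painless, since at $\theta=0,1$ the relations both reduce to commutativity and so both algebras identify canonically with $C(\T^2)$; one arranges the truncation to be $1$-periodic in $\theta$, or else appends a small terminal homotopy supported near the endpoints.

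The technical heart is Step~2: producing an explicit block-diagonal truncation and gluing so that simultaneously (i) the commutation relation is exact for every $\theta\in[0,1]$, (ii) \emph{both} $u(\theta)$ and $v(\theta)$ are norm-Hölder of exponent $\tfrac12$ (not just one of them), and (iii) the family is $1$-periodic. The symmetric Weyl pair of Step~1 is essential here: it distributes the truncation cost evenly between the two generators, which is what makes the Hölder bound symmetric. The exponent $\tfrac12$ then emerges as the optimal balance between the $N$-Lipschitz error on finite blocks and the $1/N$ tail, and I would not expect any improvement within a scheme of this type — genuinely new ideas would be needed to do better than $\tfrac12$.
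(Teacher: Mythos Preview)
The paper does not itself prove this statement --- it is quoted from \cite{HR} --- but Section~2 reproduces the method to prove a noncompact analogue (Theorem~\ref{d2}), so that is the relevant comparison.

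Your Step~2 has a genuine gap. The balancing ``choose $N\sim|\theta-\theta'|^{-1/2}$'' only makes sense if the path $u(\theta)$ is defined \emph{independently of $N$} and then compared via intermediate approximants $u_N(\theta)$: the triangle inequality would give $\norm{u(\theta)-u(\theta')}{}\le 2C/N+CN|\theta-\theta'|$, and optimising $N$ yields H\"older-$\tfrac12$. But the only $N$-independent pair you have is the one from Step~1, for which you yourself observe $\norm{u(\theta)-u(\theta')}{}=2$ generically, so the inequality is false and the optimisation proves nothing. If instead the path \emph{is} one of the truncated $u_N$'s with $N$ fixed once and for all, you obtain Lipschitz continuity with constant $O(N)$, not H\"older-$\tfrac12$. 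What is missing is a mechanism that interpolates across all scales simultaneously in a single path; producing such a mechanism is precisely the content of the theorem, and your outline does not supply one. A second, independent gap is the claim that a block truncation can be corrected by a ``rank-small'' unitary so that $uv=e^{2\pi i\theta}vu$ holds \emph{exactly} for every $\theta$. No construction is offered, and this is the genuinely hard point: generic small perturbations of a pair satisfying the relation destroy it, and there are topological obstructions (Fredholm index, Voiculescu-type phenomena) to repairing it cheaply.

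The actual Haagerup--R{\o}rdam argument, as reflected in Section~2, is structurally different. The basic step (Theorem~\ref{bounded} here, Theorem~3.1 of \cite{HR}) is a bounded-perturbation theorem: from an \emph{infinite-multiplicity} representation at parameter $\theta$ one produces, by conjugating with an explicit $C^1$ unitary-valued function $w:\R^2\to U(K)$ with controlled partial derivatives, a representation at $\theta'$ differing in norm by at most $9|\theta-\theta'|^{1/2}$. This is combined with a path-of-unitaries lemma in von Neumann algebras with properly infinite commutant (Lemma~\ref{up}) to interpolate between two nearby representations (Lemma~\ref{interpolates}), and then an inductive $k$-adic subdivision ($k=8100$) builds the map on a dense set with uniform H\"older constant, after which one extends by continuity. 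The infinite multiplicity and the properly infinite commutant are used essentially and have no analogue in your scheme.
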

It is shown by Elliott in \cite{Elliott83} that the family of rotation algebras forms a continuous field of $C^*$-algebra (see e.g. \cite{Fillmore96} for the definition). The above theorem gives this family a continuous embedding in $B(H)$. Kirchberg and Phillips \cite{Kirchberg} also obtain a Lip$^{\frac12}$ continuous embedding of rotation algebras into the Cuntz algebra $\mathcal{O}_2$. The existence of Lip$^{\frac12}$ continuous paths has applications in estimating the spectrum of almost Mathieu operators (see \cite{Boca}). For higher dimension, both Theorem 5.7 of \cite{Kirchberg} and Theorem 3.2 of \cite{bl} prove the existence of the continuous embedding in norm for noncommutative $d$-tori, but with little information about the concrete continuity.

We show that the Lip$^{\frac12}$ continuous maps also exist for noncommutative $d$-tori of dimension $d> 2$. Let us denote $\A[d]\equiv [0,1]^{\frac{(d-1)d}{2}}$ as the space of all skew-symmetric $d\times d$ matrices with entries in the unit interval.
\begin{theorem}\label{p}
There exist $d$ continuous maps $u_1,u_2, \cdots, u_d: \A[d] \to U(H)$ and a universal constant $C>0$ such that \begin{enumerate}
 \item[i)] $u_ju_k=e^{2\pi i \theta_{jk}}u_ku_j\pl, \pl j,k=1, 2,\cdots,d\pl ;$
  \item[ii)]$\displaystyle \norm{u_j(\theta)-u_j(\theta')}{}\le C(\sum_{1\le k\le d}|\theta_{jk}-\theta'_{jk}|^{\frac{1}{2}})\pl,\pl j=1, 2,\cdots,d\pl ;$
  \end{enumerate}
for all $\theta, \theta'\in  \A[d]$.
\end{theorem}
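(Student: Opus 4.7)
The plan is to obtain Theorem \ref{p} as a corollary of the noncompact Moyal construction that forms the main technical content of the paper. The earlier sections are to supply, for each skew-symmetric $\theta$, a family of $d$ one-parameter unitary groups $u_j(\cdot,\theta) : \R \to U(H)$ on a fixed separable Hilbert space $H$ implementing the Moyal relation
\[
u_j(s,\theta)\, u_k(t,\theta) \;=\; e^{i s t\theta_{jk}}\, u_k(t,\theta)\, u_j(s,\theta), \qquad s,t\in\R.
\]
The torus generators are then produced by the obvious specialization: rescale $\theta \mapsto 2\pi\theta$ and evaluate at $s=1$, i.e.\ set $u_j(\theta) := u_j(1, 2\pi\theta)$. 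Property (i) is then immediate from the Moyal relation, and the remaining task is the Lip$^{1/2}$ bound (ii).

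For (ii), the strategy is to arrange the Euclidean construction so that each $u_j$ depends only on the $j$-th row $(\theta_{jk})_{k=1}^d$ of $\theta$. The natural route is a pairwise tensor decomposition of the symplectic data: for each pair $(j,k)$ with $j<k$, assign an auxiliary Hilbert space $H_{jk}$ carrying the noncompact Haagerup--R{\o}rdam-type Weyl pair with parameter $\theta_{jk}$, and define $u_j$ as the tensor product over $k\neq j$ of the ``$j$-direction'' generator on $H_{jk}$, acting as the identity on all other blocks. Then $u_j$ manifestly does not depend on entries $\theta_{lm}$ with $l,m \neq j$. The blockwise Lip$^{1/2}$ estimate, which is the noncompact analogue of the bound in \cite{HR} supplied by earlier sections, gives
\[
\bigl\|u_j^{(jk)}(1,\theta_{jk}) - u_j^{(jk)}(1,\theta'_{jk})\bigr\| \;\le\; C\,|\theta_{jk} - \theta'_{jk}|^{1/2}
\]
with a universal constant $C$. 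A telescoping argument — interpolating between $\theta$ and $\theta'$ by changing the entries $\theta_{jk}$ one at a time and using the triangle inequality on the tensor product — sums these block estimates into the claimed bound (ii).

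The main obstacle is not this reduction, which is essentially formal once the factorization is in place, but rather the noncompact Haagerup--R{\o}rdam-type estimate itself, which the earlier sections must establish. The original argument of \cite{HR} relies crucially on the discrete spectrum and $C^*$-algebraic structure of the rotation algebras; transporting it to the Moyal (noncompact) setting while keeping the sharp $1/2$ exponent, and doing so in a form that is manifestly compatible with the pairwise tensor decomposition above, is the heart of the matter. Once that ingredient is secured, Theorem \ref{p} follows by the specialization and telescoping just described.
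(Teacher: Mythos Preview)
Your proposal is correct and the pairwise tensor decomposition you describe is exactly the construction the paper uses. The difference is the order of operations: the paper does not pass through the Euclidean case to prove Theorem~\ref{p}. It takes the $d=2$ \emph{torus} paths $u,v:[0,1]\to U(H)$ supplied directly by Haagerup--R{\o}rdam (the theorem quoted in the introduction) and tensors them up via your $H_{jk}$ construction to get the $d$-torus maps immediately, with the Lip$^{1/2}$ bound following by the same telescoping/triangle-inequality argument you give. Your route --- build the Euclidean one-parameter groups first, then set $u_j(\theta):=u_j(1,2\pi\theta)$ --- also works and is essentially how one would read Theorem~\ref{p} off as a corollary of Theorem~\ref{ultimate}; but it is more circuitous, since it makes Theorem~\ref{p} depend on the noncompact perturbation machinery of Sections~2--3, whereas the paper's direct argument needs nothing beyond the original \cite{HR} result and the tensor trick. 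In particular, your closing remark that ``the main obstacle is the noncompact Haagerup--R{\o}rdam-type estimate'' overstates what is required: that estimate is the heart of Theorem~\ref{ultimate}, but Theorem~\ref{p} by itself is a short formal consequence of the already-known $d=2$ rotation-algebra paths.
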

The proof is based on an explicit construction, which we illustrate here for the case $d=3$. Given the two continuous paths $u,v$ from Theorem \ref{HR}, we define the following maps $u_1, u_2, u_3: \A[3]\to U(H^{\ten 3})$,
\begin{align}&u_1(\theta)=u(\theta_{12})\ten u(\theta_{13}) \ten I\pl ,\pl  u_2(\theta)=v(\theta_{12})\ten I\ten u(\theta_{23})\pl,  \nonumber\\ &u_3(\theta)=I\ten v(\theta_{13})\ten v(\theta_{23})\pl . \label{tensor}
\end{align}
Because each pair of operators only shares one nontrivial tensor component (other than the identity), $u_1, u_2$ and $u_3$ satisfy the commutation relations
\begin{align*}
& u_1(\theta)u_2(\theta)= e^{2\pi i \theta_{12}}u_2(\theta)u_1(\theta) \pl, \pl u_1(\theta)u_3(\theta)= e^{2\pi i \theta_{13}}u_3(\theta)u_1(\theta) \pl,  \\ & u_2(\theta)u_3(\theta)= e^{2\pi i \theta_{23}}u_3(\theta)u_2(\theta) \pl, \end{align*}
By induction, this construction can be generalized to higher dimension and the Lip$^{\frac12}$ continuity follows from the triangle inequality. It also works for the paths in the Cuntz algebra $\mathcal{O}_2$ and implies a continuous embedding into $\mathcal{O}_2$ because $\mathcal{O}_2\ten \mathcal{O}_2=\mathcal{O}_2$ (see \cite{Kirchberg}).

We also prove the ``noncompact'' analog of the above results. It is proved in \cite{HR} that for an infinite multiplicity representation $(P, Q)$ of the Heisenberg relation, there exists a commuting pair of self-adjoint operators $(P_0, Q_0)$ on $H$ such that $P-P_0$ and $Q-Q_0$ are bounded. This bounded perturbation of unbounded operators is used in the construction of continuous path of rotation algebras. We find that the methods of Haagerup and R{\o}rdam, with careful modifications, also applies to the Heisenberg relation, to construct a continuous Moyal deformation of $\R^2$. Moreover, using the same idea of \eqref{tensor}, this can be generalized to noncommutative Euclidean space of dimension $d > 2$.

Denote $\A(d)\equiv \mathbb{R}^{\frac{d(d-1)}{2}}$ as the space of all real skew-symmetric $d\times d$-matrices. Our main result can be stated as follows.
\begin{theorem}
\label{ultimate}
There exist continuous maps $u_1,u_2,\cdots u_d: \A(d) \times \mathbb{R} \to U(H)$ and a universal constant $C>0$ such that
\begin{enumerate}
\item[i)] for each $\theta$, $u_1(\theta,\cdot),u_2(\theta,\cdot),\cdots, u_d(\theta,\cdot)$ are
 strongly continuous one-parameter unitary groups satisfying
    \[u_j(\theta,s)u_k(\theta,t)=e^{ist\theta_{jk}}u_k(\theta,t)u_j(\theta,s)\pl, \pl \pl \forall \pl s,t \in \R\pl , \pl j,k=1,\cdots, d \pl;\]
\item[ii)]for any $t\in \R$ and $\theta, \theta'\in \A(d)$,
\[ \norm{u_j(\theta,t)-u_j(\theta',t)}{}\le C|t|(\sum_k |\theta_{kj}-\theta_{kj}'|^{\frac12}) \pl, \pl  j=1,\cdots, d \]
\end{enumerate}
\end{theorem}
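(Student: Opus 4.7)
My plan is to decompose the problem into a $d=2$ base case—the noncompact analog of the Haagerup-R{\"o}rdam theorem—and a tensor-product lift to arbitrary $d$ following the pattern already displayed in (\ref{tensor}).

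\textbf{Step 1 (base case $d=2$).} I would construct, on a fixed separable infinite-dimensional Hilbert space $K$, a continuous family of strongly continuous one-parameter unitary groups $U(\theta,\cdot),V(\theta,\cdot)$, parametrised by $\theta\in\R$, satisfying
\[U(\theta,s)V(\theta,t)=e^{ist\theta}V(\theta,t)U(\theta,s)\]
together with $\|U(\theta,t)-U(\theta',t)\|+\|V(\theta,t)-V(\theta',t)\|\le C|t|\,|\theta-\theta'|^{1/2}$. The strategy is to write $U(\theta,t)=e^{itP(\theta)}$ and $V(\theta,t)=e^{itQ(\theta)}$ for self-adjoint generators obeying $[P(\theta),Q(\theta)]=-i\theta I$ on a common core, and to arrange $\theta\mapsto(P(\theta),Q(\theta))$ so that the differences $P(\theta)-P(\theta')$ and $Q(\theta)-Q(\theta')$ are bounded self-adjoint operators of norm $O(|\theta-\theta'|^{1/2})$. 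The elementary estimate
\[\|e^{itP}-e^{itP'}\|=\|e^{itP}e^{-itP'}-I\|\le |t|\,\|P-P'\|,\]
obtained by integrating $\frac{d}{ds}(e^{isP}e^{-isP'})=e^{isP}\,i(P-P')\,e^{-isP'}$ from $0$ to $t$ (valid whenever $P-P'$ is bounded), then converts this generator control into the claimed Lip$^{1/2}$ bound on the groups, automatically carrying the $|t|$ prefactor.

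The construction of these generators is the technical heart of the argument. I would adapt the HR shift-plus-multiplication interpolation to the Schr{\"o}dinger picture on $L^2(\R)$: replace the bilateral shift on $\ell^2(\Z)$ used by HR by the translation generator $-i\partial_x$, and replace the discrete rotation phases by multiplication by linear characters, so that an HR-style sum of localised perturbations produces Heisenberg generators depending H{\"o}lder-$\tfrac12$-continuously on $\theta$. The key input, already quoted in the introduction, is the bounded-perturbation result of \cite{HR} that any infinite-multiplicity Heisenberg pair $(P,Q)$ is a bounded perturbation of a commuting pair $(P_0,Q_0)$; this provides the mechanism for moving between the commuting $\theta=0$ regime and nonzero $\theta$ while keeping differences bounded. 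I expect this to be the main obstacle: the naive rescalings $(P,\theta Q)$ or $(\sqrt{\theta}P,\sqrt{\theta}Q)$ either spoil the commutator or produce unbounded differences, so the HR interpolation has to be lifted to the unbounded setting with care for common cores and essential self-adjointness of the perturbed generators.

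\textbf{Step 2 (tensor lift).} Given Step 1, take $H=\bigotimes_{1\le j<k\le d}H_{jk}$ with a copy $(U_{jk},V_{jk})$ of the base pair acting on each $H_{jk}$, and define
\[u_j(\theta,t)\;=\;\Bigl(\bigotimes_{k>j}U_{jk}(\theta_{jk},t)\Bigr)\otimes\Bigl(\bigotimes_{k<j}V_{kj}(\theta_{kj},t)\Bigr),\]
with the identity on the remaining factors. Exactly as in the discussion of (\ref{tensor}), every pair $u_j,u_\ell$ with $j<\ell$ acts nontrivially on a single common factor $H_{j\ell}$, contributing the required phase $e^{ist\theta_{j\ell}}$ via the Weyl relation of Step 1, while all other factors commute; this gives (i), and strong continuity in $t$ is inherited factorwise. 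For (ii), telescope
\[A_1\otimes\cdots\otimes A_m-A_1'\otimes\cdots\otimes A_m'=\sum_{i=1}^m A_1'\otimes\cdots\otimes A_{i-1}'\otimes(A_i-A_i')\otimes A_{i+1}\otimes\cdots\otimes A_m\]
across the at most $d-1$ nontrivial factors of $u_j(\theta,t)$. Since all other factors are unitaries of norm one, the base-case bound applied factorwise (together with skew-symmetry $|\theta_{kj}-\theta_{kj}'|=|\theta_{jk}-\theta_{jk}'|$) yields exactly the estimate in (ii).
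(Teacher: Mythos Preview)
Your proposal is correct and follows essentially the same route as the paper. Step~1 is exactly Section~2 (Theorem~\ref{bounded} through Corollary~\ref{continuousd2}), including the conversion $\|e^{itP}-e^{itP'}\|\le|t|\,\|P-P'\|$ (Proposition~\ref{eq}); your Step~2 is the unfolded, non-inductive form of the paper's inductive tensor construction in the proof of Theorem~\ref{ultimate}, and the telescoping estimate is the paper's triangle-inequality step \eqref{norm}.
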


The present work is organized as follows. In Section 2, we apply the method of Haagerup and R{\o}rdam to construct a continuous deformation of the Heisenberg relations. Section 3 extends both the bounded perturbation from \cite{HR} and the continuous deformation to noncommutative Euclidean space of higher dimension.  Section 4 is devoted to corresponding results for noncommutative $d$-tori.

\section{Continuous Perturbation of Heisenberg relations} We first discuss Theorem \ref{ultimate} for $d=2$ by the method of Haagerup of R{\o}rdam in  \cite{HR}. In this section,  $\theta$ always denotes a real number. Let $P$ and $Q$ be two (unbounded) operators on a Hilbert space $H$ and $u(s)=e^{iPs}, v(t)=e^{iQt}$ be their associated one-parameter groups.
For a nonzero $\theta$, we say $P$ and $Q$ satisfy \emph{the Heisenberg relation with parameter $\theta$} \begin{align}[P,Q]=PQ-QP=-i\theta I \pl, \label{Heisenberg}\end{align}
if $u(s), v(t)$ satisfy the Weyl relation
\begin{align}u(s)v(t)=e^{ist\theta }v(t)u(s)\pl. \label{weyl}\end{align}
When $\theta=1$, we call \eqref{Heisenberg}  \emph{the standard Heisenberg relation}.
Thanks to the well-known von Neumann-Stone theorem (c.f. \cite{Hall} pp. 285-287), any representation of the standard Heisenberg relation is unitarily equivalent to a (finite or infinite) multiple of the Schr{\"o}dinger picture. More precisely, the only irreducible representation, up to a unitary equivalence, is given by the momentum operator and position operator from quantum mechanics
\[Pf=-i\frac{df}{dx}\pl, \pl (Qf)(x)=xf(x)\pl ,\pl f\in C_c^1(\R)\pl.\]
Both $P, Q$ are unbounded self-adjoint operators on the Hilbert space $L_2(\mathbb{R})$ and they have a common core of $C^1_c(\R)$ (continuously differentiable compactly supported functions).
The associated one-parameter unitary groups are given by
\begin{align}(u(s)f)(x)=f(x+s)\pl,  \pl (v(t)f)(x)=e^{ixt}f(x)\pl, \label{action}\end{align}
which satisfy \eqref{weyl}. Note that \eqref{Heisenberg} implies that
$(\frac{1}{\theta}P, Q)$ satisfies the standard Heisenberg relation, then the Stone-von Neumann Theorem is easily generalized for any nonzero $\theta$.
When $\theta=0$, the one-parameter groups commute
\[e^{iPs}e^{iQt}=e^{iQt}e^{iPs} \pl,\]
and we say $P$ and $Q$ commute \emph{strongly}. Strongly commuting pairs $(P,Q)$ are one-to-one corresponding to unitary representations of $\R^2$. In particular, the left regular group representation of $\R^2$ is given by
\begin{align}\label{R2}Pf=-i\frac{df}{dx}\pl, \pl Q=-i\frac{df}{dy}\pl
\end{align}
as unbounded self-adjoint operators on $L_2(\R^2)$ and the unitaries are translations,
\begin{align*}
(u(t)f)(x,y)= f(x+t,y)\pl ,\pl (v(t)f)(x,y)= f(x,y+t) \pl, \pl f\in L_2(\R^2)\pl.
\end{align*}
We will combine the discussions of Section 3 and 5 from \cite{HR}, working on the unbounded operators $(P,Q)$ instead of unitaries. Let us begin with a modification of Theorem 3.1 of \cite{HR}. 
\begin{theorem}\label{bounded}Let $\theta\neq 0$. Let $(P,Q)$ be an representation of $[P,Q]=-i\theta I$ with infinite multiplicity on a separable Hilbert space $H$. Then for any $\theta' \in \R$, there exist self-adjoint operators $P'$ and $Q'$ on $H$ satisfying $[P', Q']=-i\theta'I$ such that $P-P'$ and $Q-Q'$ are bounded and moreover,
\[\max \{\norm{P-P'}{}\pl ,\norm{Q-Q'}{}\}\le 9|\theta -\theta'|^{\frac{1}{2}}\pl .\]
\end{theorem}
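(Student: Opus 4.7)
The plan is to carry out the Haagerup--Rørdam perturbation argument of \cite[Section~3]{HR}, but working directly with the unbounded infinitesimal generators $P, Q$ rather than with the unitary exponentials $u(s), v(t)$. Two ingredients make this work: a concrete Stone--von Neumann model that exploits the infinite-multiplicity hypothesis, and a blockwise rescaling that uses the multiplicity to absorb the otherwise unbounded cost of changing the commutator from $-i\theta I$ to $-i\theta' I$.

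\textbf{Model.} Applying Stone--von Neumann to the rescaled pair $(\theta^{-1}P, Q)$, the infinite-multiplicity assumption lets me pass to the concrete model $H \cong L_2(\R, K)$ with $K$ a separable infinite-dimensional Hilbert space, $Q$ multiplication by $x$, and $P = -i\theta\,\partial_x$ (each tensored with the identity on the $K$-fibre). The corresponding one-parameter groups act as $u(s)f(x) = f(x+\theta s)$ and $v(t)f(x) = e^{itx}f(x)$ pointwise in the $K$-variable. The crucial auxiliary structure supplied by $K$ is an isometry $S \in \B(K)$ with infinite-dimensional cokernel, for instance a bilateral shift on an $\ell_2(\Z)$ subspace of $K$.

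\textbf{Blockwise conjugation and estimates.} Introduce a length scale $L > 0$ (to be optimized) and partition $\R$ into intervals $I_n = [nL, (n+1)L)$, giving $H = \bigoplus_{n \in \Z} L_2(I_n, K)$. Following \cite{HR}, I construct a unitary $W$ on $H$ that on each slab implements a local rescaling of the position coordinate and uses powers $S^n$ on the $K$-fibre to reconcile neighbouring slabs without destroying strong continuity or self-adjointness. Setting $(P', Q') := (W^* P_{\theta'} W,\, W^* Q_{\theta'} W)$, where $(P_{\theta'}, Q_{\theta'})$ is the standard $\theta'$-Heisenberg pair on the same $H$, the relation $[P', Q'] = -i\theta' I$ is automatic by unitary invariance. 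The local rescaling contributes to $\|Q - Q'\|$ at most $\sim |\theta - \theta'| L$ per slab, while the slab-boundary jumps absorbed by $S$ contribute to $\|P - P'\|$ at most $\sim L^{-1}$. Balancing $L \sim |\theta - \theta'|^{-1/2}$ yields the $\tfrac12$-Hölder rate, and tracking constants as in \cite[Section~3]{HR} gives the explicit $9|\theta - \theta'|^{1/2}$.

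\textbf{Main obstacle.} The delicate point is constructing $W$ so that $P'$ remains essentially self-adjoint on a common dense core (naturally $C^1_c(\R) \otimes K$), the induced groups $u'(s), v'(t)$ are strongly continuous one-parameter groups, and the commutation relation holds exactly across every slab boundary—not just approximately. The naive global rescaling $P \mapsto (\theta'/\theta)P$ fails because the difference $\tfrac{\theta - \theta'}{\theta}P$ is unbounded, and no purely additive bounded perturbation $(P+A, Q+B)$ acting on $L_2(\R)$ alone can achieve the required change of commutator. The infinite multiplicity, encoded by the isometry $S$, is precisely what provides the extra degrees of freedom to patch the local rescalings into a single globally defined bounded perturbation, which is the technical heart of the argument and the analog of the corresponding matching step in \cite{HR}.
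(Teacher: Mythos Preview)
Your outline diverges from the paper's proof in both the model and the mechanism. The paper works on $L_2(\R^2, K)$ (not $L_2(\R, K)$) with
\[ P = -i\delta\,\partial_x, \qquad Q = -i\,\partial_y + \tfrac{\theta}{\delta}\,x, \qquad \delta := |\theta - \theta'|^{1/2}, \]
so that \emph{both} generators contain a partial derivative. Passing from $\theta$ to $\theta'$ then amounts to adding the unbounded multiplier $\tfrac{\theta'-\theta}{\delta}\,x$ to $Q$, and the paper conjugates by the pointwise unitary $\bar w(x,y) := w(x, \delta y)$, where $w \colon \R^2 \to U(K)$ is the $C^1$ map already supplied by \cite[Theorem~3.1]{HR} satisfying $\sup\|\partial_x w\| \le 9$ and $\sup\|\partial_y w - ixw\| \le 9$. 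The second bound lets $\bar w$ absorb the added $\delta x$ in $Q$, the first controls the perturbation of $P$, and the constant $9$ drops out directly with no length-scale balancing. All $\theta'$---including $\theta' = 0$---are handled uniformly in a half-page computation.

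In your one-variable model $Q = x$ is pure multiplication, so any pointwise-multiplication unitary commutes with it and cannot alter the commutator; this forces you into slab-wise dilations patched by fibre shifts, which is effectively a proposal to reprove \cite[Theorem~3.1]{HR} from scratch in different coordinates. You correctly flag the construction of $W$ as the technical heart but do not carry it out, so as written this is a plan rather than a proof. There is also a concrete concern: if ``local rescaling of the position coordinate'' means dilating by a factor comparable to $\theta/\theta'$ on each slab, the scheme degenerates as $\theta' \to 0$, whereas the statement demands a bound depending only on $|\theta - \theta'|^{1/2}$. The paper's two-variable model sidesteps this entirely, because the transition from $\theta$ to $\theta'$ becomes \emph{additive} (add a multiple of $x$ to $Q$) rather than multiplicative (rescale a derivative), and the pre-built function $w$ from \cite{HR} is exactly the object that makes that addition bounded.
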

\begin{proof} We may first assume $\theta'>\theta$ and denote $\delta= |\theta'-\theta|^{\frac12}$. Let $K$ be an infinite dimensional separable Hilbert space. Because all infinite multiplicity representations of the Heisenberg relation on a separable Hilbert space are unitarily equivalent,
we may assume that $(P,Q)$ is given by
\[P=-i\delta\frac{\partial }{\partial x}\pl, \pl Q = -i\frac{\partial}{\partial y}+\frac{\theta}{\delta}  x \pl,\]
on $L_2(\R^2,K)$. The associated one-parameter groups $u(t)=e^{iPt}, v(t)=e^{iQt}$ are
\[(u(t)f)(x,y)=f(x+\delta t, y)\pl, (v(t)f)(x,y)=e^{i\frac{\theta}{\delta} x t}f(x, y+ t)\pl.\]
Let $w: \mathbb{R}^2 \to U(K)$ be a $C^1$-function with values in the unitary group $U(K)$ of $K$. It can be regarded as a unitary on $L_2(\mathbb{R}^2, K)$ via pointwise action
\[(wf)(x,y)=w(x,y)f(x,y) \pl.\]
The subspace $C^1_c(\R^2, K)$ is a common core of $P, Q$ and also invariant under $w$. Then for $f \in C^1_c(\R^2, K)$,
\begin{align}\label{cal}(w^*Pw f)(x,y)&= -i\delta(\frac{\partial f}{\partial x}(x,y)+iw(x,y)^*\frac{\partial w}{\partial x}(x,y) f(x,y))\pl ,  \nonumber \\(w^*Qw f)(x,y)&= -i(\frac{\partial f}{\partial y}(x,y)+iw(x,y)^*\frac{\partial w}{\partial y}(x,y)f(x,y))+\frac{\theta}{\delta} xf(x,y) \pl .
\end{align}
It is proved in Theorem 3.1 of \cite{HR} that there exists a $C^1$-function $w: \mathbb{R}^2 \to U(K)$ such that
\begin{align}\sup_{(x,y)\in \R^2}\norm{\frac{\partial w}{\partial x}(x,y)}{}\le 9\pl, \pl \sup_{(x,y)\in \R^2}\norm{\frac{\partial w}{\partial y}(x,y)-ixw(x,y)}{}\le 9\pl. \label{w}
\end{align}
Set $\bar{w}(x,y)=w(x,\delta y)$, and choose the self-adjoint operators $P'= \bar{w}P\bar{w}^*\pl , Q'=\bar{w}(Q+\frac{\theta'-\theta}{\delta} x)\bar{w}^*$. The pair $(P', Q')$ satisfies $[P', Q']=-i\theta' I$ and also shares the common core $C^1_c(\R^2, K)$. On this dense domain $C^1_c(\R^2, K)$
\begin{align*}
P-P'&=\bar{w}(\bar{w}^*P\bar{w}- P)\bar{w}^*=i\bar{w}(\delta \bar{w}^*\frac{\partial \bar{w}}{\partial x})\bar{w}^*=i\delta \frac{\partial \bar{w}}{\partial x}\bar{w}^*\pl ,\\
Q-Q'&= \bar{w}(\bar{w}^*Q\bar{w}- Q-\delta x)\bar{w}^*= i\bar{w}(\bar{w}^*\frac{\partial \bar{w}}{\partial y}-\delta x)\bar{w}^*=i(\frac{\partial \bar{w}}{\partial y}\bar{w}^*-\delta x) \pl.
\end{align*}
Both are bounded because for any $(x,y)\in \R^2$,
\begin{align*}
\norm{-i \frac{\delta\partial \bar{w}}{\partial x}(x,y)\bar{w}(x,y)^*}{}&= \norm{\delta \frac{\partial w}{\partial x}(x, \delta y)}{} \le 9\delta \pl , \\
\norm{-i\frac{\partial \bar{w}}{\partial y}(x,y)\bar{w}^*(x,y)-\delta x}{}&= \norm{\delta(\frac{\partial w}{\partial y}(x, \delta y)-ixw(x, \delta y))}{}\le 9\delta \pl.
\end{align*}
For $\theta>\theta'$, the estimates follow similarly by taking $\bar{w}(x,y)=\bar{w}(x, -\delta y)$.
\end{proof}
\begin{rem}{\rm The pair $(P',Q')$ gives a representation of infinite multiplicity. In particular when $\theta'= 0$, $P'$ and $Q'$ strongly commute and are unitarily equivalent to an infinite multiple of regular representations \eqref{R2}. Conversely, the above theorem remains valid for $\theta=0$ if in addition $(P,Q)$ is unitarily equivalent to the regular representation. \label{sc}}
\end{rem}
Stone's theorem states that self-adjoint operators on a Hilbert space $H$ are  one-to-one correspondent to one-parameter unitary groups in $B(H)$. The next proposition shows that this correspondence is of certain bi-continuity.
\begin{prop} \label{eq}
Let $P$ and $P'$ be (possibly unbounded) self-adjoint operators on a
Hilbert space $H$. Then their domains coincide $D(P)=D(P')$ and $P-P'$ is bounded with its norm less than a constant $C> 0$ if and only if $\norm{e^{iPt}-e^{iP't}}{}\le C|t|$ for any $t\in \R$.
\end{prop}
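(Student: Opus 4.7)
For the ($\Rightarrow$) direction, the natural idea is a Duhamel-type formula. Assuming $D(P)=D(P')$ and $B:=P'-P$ extends to a bounded operator of norm $\le C$, I would fix $\xi\in D(P)=D(P')$ and consider the vector-valued function $F(s)=e^{iP(t-s)}e^{iP's}\xi$ on $[0,t]$. Since $e^{iP's}\xi\in D(P')=D(P)$ for every $s$, one can differentiate $F$ strongly and get
\[
F'(s) \pl = \pl i\pl e^{iP(t-s)}(P'-P)e^{iP's}\xi \pl,
\]
so that
\[
(e^{iP't}-e^{iPt})\xi \pl = \pl i\int_0^t e^{iP(t-s)}(P'-P)e^{iP's}\xi\pl ds\pl .
\]
The integrand has norm $\le C\|\xi\|$, giving $\|(e^{iPt}-e^{iP't})\xi\|\le C|t|\|\xi\|$, and density of $D(P)$ in $H$ completes the estimate. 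The only care needed is to verify strong differentiability of $F$, which follows from the fact that $s\mapsto e^{iP's}\xi$ is norm-continuous into $D(P')$ equipped with the graph norm (since $P'$ commutes with its own group), and $P-P'$ is bounded on that domain.

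For the ($\Leftarrow$) direction, the hypothesis says $\|(e^{iPt}-e^{iP't})\xi\|\le C|t|\|\xi\|$ for every $\xi\in H$. I would first show $D(P')\subset D(P)$. Given $\xi\in D(P')$ one has the strong limit $t^{-1}(e^{iP't}-I)\xi\to iP'\xi$, and combining with the hypothesis,
\[
\Big\|\frac{e^{iPt}\xi-\xi}{t}\Big\| \pl \le \pl \Big\|\frac{e^{iP't}\xi-\xi}{t}\Big\|+C\|\xi\|
\]
stays bounded as $t\to 0$. Using the spectral measure $\mu_\xi$ of $P$, one writes $\|t^{-1}(e^{iPt}-I)\xi\|^2=\int |t^{-1}(e^{i\lambda t}-1)|^2 d\mu_\xi(\lambda)$, and Fatou's lemma applied to the pointwise limit $|t^{-1}(e^{i\lambda t}-1)|^2\to \lambda^2$ yields $\int \lambda^2 d\mu_\xi<\infty$, i.e.\ $\xi\in D(P)$. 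The symmetric argument gives $D(P)=D(P')$.

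Having aligned the domains, for $\xi\in D(P)=D(P')$ both $t^{-1}(e^{iPt}-I)\xi$ and $t^{-1}(e^{iP't}-I)\xi$ converge strongly to $iP\xi$ and $iP'\xi$ respectively; subtracting,
\[
i(P-P')\xi \pl = \pl \lim_{t\to 0}\frac{e^{iPt}\xi-e^{iP't}\xi}{t}\pl ,
\]
and the hypothesis forces $\|(P-P')\xi\|\le C\|\xi\|$. The main obstacle I anticipate is the Duhamel step in the $(\Rightarrow)$ direction, where one has to make sure the pointwise differentiation of $F(s)$ is strong in $H$ and the integrand is Bochner measurable; everything else reduces cleanly to the spectral calculus of a single self-adjoint operator.
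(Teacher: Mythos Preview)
Your argument is correct in both directions, and your $(\Rightarrow)$ direction via Duhamel is essentially the content of Lemma~4.3 in \cite{HR}, which is all the paper invokes there.

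For the $(\Leftarrow)$ direction your route is genuinely different from the paper's. The paper works bilinearly: it fixes $\xi\in D(P)$, $\eta\in D(P')$, differentiates the pairing $\langle e^{iPt}\xi,e^{iP't}\eta\rangle$ at $t=0$, and compares with the bound $|\langle(e^{-iP't}e^{iPt}-1)\xi,\eta\rangle|\le C|t|\,\|\xi\|\,\|\eta\|$ to obtain $|\langle\xi,P'\eta\rangle|\le(C\|\xi\|+\|P\xi\|)\|\eta\|$; self-adjointness of $P'$ then gives $\xi\in D(P'^*)=D(P')$, and the norm bound on $P-P'$ drops out of the same inequality. You instead use the spectral measure of $P$ and Fatou's lemma to upgrade boundedness of the difference quotients $t^{-1}(e^{iPt}-I)\xi$ to $\xi\in D(P)$, and then take the strong limit directly. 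Your approach is arguably more hands-on and yields the strong limit $i(P-P')\xi=\lim_{t\to 0}t^{-1}(e^{iPt}-e^{iP't})\xi$ explicitly; the paper's sesquilinear trick avoids the spectral theorem altogether and uses only $D(P'^*)=D(P')$, which makes it slightly softer but perhaps less transparent about where the domain equality comes from.
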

\begin{proof}The necessity is Lemma 4.3 of \cite{HR}. Here we prove the sufficiency. For $\xi\in D(P)$ and $\eta\in D(P')$, it follows by Stone's theorem that
\[\lim_{t\to 0}\frac{e^{iPt}\xi-\xi}{t}=iP\xi \pl, \pl \lim_{t\to 0}\frac{e^{iP't}\eta-\eta}{t}=iP'\eta\]
converge strongly. Then the derivative of the inner product $\lan e^{iPt}\xi,  e^{iP't}\eta\ran$ at $t=0$ is given by
\begin{align*}\lim_{t\to 0}\frac{1}{t}(\lan e^{iPt}\xi,  e^{iP't}\eta\ran-\lan\xi, \eta\ran)&=\lim_{t\to 0}\frac{1}{t}(\lan e^{iPt}\xi,  e^{iP't}\eta\ran-\lan e^{iPt}\xi, \eta\ran)+\frac{1}{t}(\lan e^{iPt}\xi, \eta\ran -\lan\xi, \eta\ran)\\
=&\lan\xi,iP'\eta\ran+\lan iP\xi,\eta\ran \pl.\end{align*}
On the other hand,
\[ \norm{e^{-iP't}e^{iPt}-1}{}=\norm{e^{iPt}-e^{iP't}}{} \le C|t| \pl, \pl t\in\R\] by assumptions. This implies
\begin{align*}
\lan e^{iPt}\xi,  e^{iP't}\eta\ran-\lan\xi, \eta\ran=\lan (e^{-iP't}e^{iPt}-1)\xi, \eta\ran\le Ct\norm{\xi}{}\norm{\eta}{}\pl.
\end{align*}
Therefore
\begin{align}
\label{diff}
|\lan\xi,iP'\eta\ran-\lan iP\xi,\eta\ran |\le C\norm{\xi}{}\norm{\eta}{} \pl,\pl |\lan\xi , P'\eta\rangle|\le  (C\norm{\xi}{}+\norm{P\xi}{})\norm{\eta}{} \pl.
\end{align}
Since $P'$ is self-adjoint, we have $\xi\in D(P'^*)=D(P')$. Now we are able to rewrite \eqref{diff} to obtain
\[|\lan (P-P')\xi,\eta\ran|\le C\norm{\xi}{}\norm{\eta}{}\]
 for all $\xi\in D(P),\eta\in D(P')$. Since $D(P)$ and $D(P')$ are dense in $H$, $\norm{P-P'}{}\le C$ and $P$ and $P'$ have the same domain. Note that for sufficiency we only use \[\norm{e^{iPt}-e^{iP't}}{}\le C|t|\pl , \pl t\in [0, \epsilon]\] for some $\epsilon>0$.
\end{proof}
The following is Lemma 5.1 of \cite{HR}, which is used as a key tool in the construction of continuous paths. We omit its proof here.
\begin{lemma}\label{up}
Let $M\subset B(H)$ be a von-Neumann algebra with properly infinite commutant $M'$. For a unitary $u\in M$, there exists a smooth path $u(t), t\in [0,1]$ of unitary, such that \begin{enumerate}
\item[i)]$u(0)=1$ and $u(1)=u$;
\item[ii)]$\norm{u'(t)}{}\le 9$;
\item[iii)]$\norm{[u(t),a]}{}\le 4\norm{[u,a]}{}$;
\item[iv)]$\norm{[u'(t),a]}{}\le 9\norm{[u,a]}{}$;
\item[v)]$\norm{\frac{d}{dt} u(t)au(t)^*}{}\le 45\norm{[u,a]}{}$;
\end{enumerate}
for all $t\in [0,1]$ and $a\in M$.
\end{lemma}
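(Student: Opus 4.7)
The natural strategy is to reduce everything to constructing a smooth projection-valued path in the commutant. The crucial algebraic observation is that projections $p\in M'$ commute with every $a\in M$, so the ansatz
\[
u(t)=p(t)u+(1-p(t)),\qquad t\in[0,1],
\]
produces a unitary for each projection $p(t)\in M'$: using $[p(t),u]=0$ and $p(t)(1-p(t))=0$ one gets $u(t)u(t)^{*}=p(t)+(1-p(t))=1$. Choosing $p(0)=0$ and $p(1)=1$ makes $u(0)=1$ and $u(1)=u$, so (i) is immediate.

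Assume that $p:[0,1]\to M'$ is a norm-smooth projection-valued path with $\|p'(t)\|\le 9/2$. Since $p(t)$ and $p'(t)$ both lie in $M'$ they commute with every $a\in M$, and straight differentiation gives $u'(t)=p'(t)(u-1)$, so $\|u'(t)\|\le 2\|p'(t)\|\le 9$, which is (ii). From $[p(t),a]=[p'(t),a]=0$ one reads off
\[
[u(t),a]=p(t)[u,a],\qquad [u'(t),a]=p'(t)[u,a],
\]
yielding (iii) and (iv) with room to spare. For (v), combining $p(t)(1-p(t))=0$ with the commutation of $p(t)$ and $a$ simplifies $u(t)au(t)^{*}$ to $p(t)\,uau^{*}+(1-p(t))\,a$, whence
\[
\tfrac{d}{dt}\bigl(u(t)au(t)^{*}\bigr)=p'(t)(uau^{*}-a)=p'(t)[u,a]u^{*},
\]
and the norm is at most $\|p'(t)\|\cdot\|[u,a]\|\le 45\|[u,a]\|$.

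The entire lemma thus reduces to the purely commutant statement: \emph{in the properly infinite von Neumann algebra $M'$, there exists a norm-smooth path of projections $p:[0,1]\to M'$ from $0$ to $1$ with $\|p'(t)\|\le 9/2$.} This is exactly where proper infiniteness is essential: otherwise projections of rank $0$ and rank $1$ are simply not joined by a norm-continuous path. The construction I would use fixes a sequence of pairwise orthogonal equivalent projections $\{e_{n}\}_{n\ge 1}\subset M'$ with $\sum_{n}e_{n}=1$ (guaranteed by proper infiniteness), and inside each two-dimensional block $\mathrm{End}(e_{n}H\oplus e_{n+1}H)\subset M'$ uses the standard rotation $\bigl(\begin{smallmatrix}\cos^{2}\theta & \sin\theta\cos\theta\\ \sin\theta\cos\theta & \sin^{2}\theta\end{smallmatrix}\bigr)$ to smoothly enlarge the partial sum $e_{1}+\cdots+e_{n}$ to $e_{1}+\cdots+e_{n+1}$. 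Reparametrizing the time spent in each block geometrically so as to fit $[0,1]$ then glues these rotations together.

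The main obstacle is precisely the uniform bound on $\|p'(t)\|$: naive concatenation of rotations would produce an unbounded derivative near the accumulation point where the projection first reaches $1$, so one needs to speed up the early blocks and slow down the later ones (or equivalently, use a summable sequence of rotation angles) and to smooth out the transition between blocks by a partition-of-unity argument. Once a path with $\|p'(t)\|\le 9/2$ is produced, all of (i)--(v) follow from the algebraic manipulations above, and no further interaction between $M$ and $M'$ is needed.
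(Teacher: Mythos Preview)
Your reduction has a fatal gap: there is \emph{no} norm-continuous path of projections from $0$ to $1$ in any nonzero $C^{*}$-algebra, properly infinite or not. The reason is the standard fact that two projections $p,q$ with $\|p-q\|<1$ are unitarily equivalent; a norm-continuous path $p(t)$ on $[0,1]$ would, by compactness, give a finite chain $0=p(t_0),p(t_1),\ldots,p(t_n)=1$ with consecutive distances $<1$, forcing $0\sim_u 1$, which is absurd. Concretely, in your block construction one has $1-p(t)\ge e_k$ for some $k$ whenever $t<1$, so $\|1-p(t)\|=1$ for all $t<1$ and the path is not even continuous at $t=1$. The difficulty you flagged (``unbounded derivative near the accumulation point'') is a symptom of this: it is not a matter of clever reparametrization, the endpoint is simply unreachable in norm. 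Proper infiniteness of $M'$ does not help here; what it buys is something different, namely room to let the path $u(t)$ leave $M$ and use auxiliary structure in $B(H)$.

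The paper does not actually prove this lemma --- it quotes it verbatim as Lemma~5.1 of Haagerup--R{\o}rdam and omits the proof --- so there is no ``paper's own proof'' to compare against. In \cite{HR} the path $u(t)$ is \emph{not} of the form $p(t)u+(1-p(t))$; rather, proper infiniteness of $M'$ is used to realize $H\cong L_2(\mathbb{R})\otimes K$ with $M\subset 1\otimes B(K)$, and the path is built from the unitary-valued function $w:\mathbb{R}^2\to U(K)$ appearing in Theorem~\ref{bounded} (this is why the same constant $9$ shows up). The commutator bounds (iii)--(v) come from the derivative estimates \eqref{w} on $w$, not from any projection calculus. If you want to repair your approach, you would need $u(t)$ to genuinely mix $M$ and $M'$ rather than sit in $M'\cdot u + M'$.
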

The next lemma is an analog of Lemma 5.2 of \cite{HR}. 
\begin{lemma}
Let $\theta \neq \theta'$ both be nonzero and $k\in \mathbb{N}$ be given. Let $(P(\theta),Q(\theta))$ (resp. $(P(\theta'),Q(\theta'))$) be a representation of ${[P, Q]=-i \theta I}$ (resp. ${[P,Q]=-i \theta'I}$) of infinite multiplicity on a separable Hilbert space $H$. Denote the associated one-parameter unitary groups as \[u_0(t)=e^{iP(\theta)t}\pl,  v_0(t)=e^{iQ(\theta)t}\pl, u_1(t)=e^{iP(\theta')t}\pl , v_1(t)=e^{iQ(\theta')t}.\]
Assume that the commutant of $\{u_0(t),u_1(t), v_0(t), v_1(t)\pl |\pl t\in \R\}$ is properly infinite and $P(\theta)-P(\theta')$ and $Q(\theta)-Q(\theta')$ are bounded. Denote
\[d=\max\{\norm{P(\theta)-P(\theta')}{},\norm{Q(\theta)-Q(\theta')}{}\}\pl,\]
and set
\[s_j=\theta+\frac{j}{k}(\theta'-\theta)\pl, \pl j=0,1,\cdots, k\pl\]
so that $s_0=\theta, s_k=\theta'$.
Then there exist pairs $(P(s_j),Q(s_j)), j=1,2,\cdots k-1$, of self-adjoint operators on $H$ such that
\begin{enumerate}
\item[i)] $(P(s_j),Q(s_j))$ satisfies the Heisenberg relation $[P(s_j), Q(s_j)]=-i s_jI$;
\item[ii)] $P(s_j)-P(s_{j+1})$ and $Q(s_j)-Q(s_{j+1})$ are bounded and
\[\max \{\norm{P(s_j)-P(s_{j+1})}{},\norm{Q(s_j)-Q(s_{j+1})}{}\}\le 1224({|\theta-\theta'|}/{k})^{\frac12}+45{d}/{k} \pl;\]
\item[iii)] the commutant of the one-paramter groups $\{e^{iP(s_0)t},   \cdots, e^{iP(s_k)t}, e^{iQ(s_0)t},\cdots,e^{iQ(s_k)t}\}$ is properly infinite.
\end{enumerate}\label{interpolates}
\end{lemma}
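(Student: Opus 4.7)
The plan adapts Lemma~5.2 of \cite{HR} to the unbounded setting, using Theorem~\ref{bounded} for parameter shifts, Lemma~\ref{up} for a smooth path of unitaries, and Proposition~\ref{eq} to translate between generators and one-parameter groups. Set $\delta:=(|\theta'-\theta|/k)^{1/2}$. Iteratively apply Theorem~\ref{bounded} starting from $(\hat{P}_0,\hat{Q}_0):=(P(\theta),Q(\theta))$, shifting the parameter by $(\theta'-\theta)/k$ at each step, to produce pairs $(\hat{P}_j,\hat{Q}_j)$ of infinite-multiplicity representations at parameter $s_j$ with
\[\max\{\norm{\hat{P}_{j+1}-\hat{P}_j}{},\norm{\hat{Q}_{j+1}-\hat{Q}_j}{}\}\le 9\delta.\]
Telescoping gives $\norm{\hat{P}_k-P(\theta)}{}\le 9\sqrt{k|\theta'-\theta|}$, which grows with $k$ but will be absorbed below.

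Since $(\hat{P}_k,\hat{Q}_k)$ and $(P(\theta'),Q(\theta'))$ are both infinite-multiplicity Heisenberg representations at parameter $\theta'$ on the separable space $H$, Stone--von Neumann provides a unitary $W$ with $W\hat{P}_kW^*=P(\theta')$ and $W\hat{Q}_kW^*=Q(\theta')$. The plan is to arrange $W$ to lie in the von Neumann algebra $M$ generated by the four original one-parameter groups together with those produced by the iteration, whose commutant inherits proper infiniteness from the hypothesis. Applying Lemma~\ref{up} to $W\in M$ then yields a $C^1$-path $w(\tau)$, $\tau\in[0,1]$, of unitaries in $M$ with $w(0)=I$ and $w(1)=W$. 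Writing $w_j:=w(j/k)$, define
\[P(s_j):=w_j\hat{P}_jw_j^*,\qquad Q(s_j):=w_j\hat{Q}_jw_j^*.\]
Then $P(s_0)=P(\theta)$ and $P(s_k)=P(\theta')$; the Heisenberg relation at parameter $s_j$ is preserved under conjugation, and every new one-parameter group lies in $M$, giving (i) and (iii).

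For (ii), by Proposition~\ref{eq} it suffices to bound $\norm{e^{iP(s_{j+1})t}-e^{iP(s_j)t}}{}$ linearly in $|t|$. Splitting
\begin{align*}
e^{iP(s_{j+1})t}-e^{iP(s_j)t}&=w_{j+1}\bigl(e^{i\hat{P}_{j+1}t}-e^{i\hat{P}_jt}\bigr)w_{j+1}^*\\
&\pll+\bigl(w_{j+1}e^{i\hat{P}_jt}w_{j+1}^*-w_je^{i\hat{P}_jt}w_j^*\bigr),
\end{align*}
the first summand has norm $\le 9\delta|t|$ by Theorem~\ref{bounded} and Proposition~\ref{eq}. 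The second summand, the variation of $\tau\mapsto w(\tau)e^{i\hat{P}_jt}w(\tau)^*$ across an interval of length $1/k$, is bounded by $45\norm{[W,e^{i\hat{P}_jt}]}{}/k$ via Lemma~\ref{up}(v). Using $W\hat{P}_kW^*=P(\theta')$, the telescoped bound, and the hypothesis $\norm{P(\theta)-P(\theta')}{}\le d$, a direct estimate gives $\norm{W\hat{P}_jW^*-\hat{P}_j}{}\le\norm{\hat{P}_j-\hat{P}_k}{}+\norm{P(\theta')-\hat{P}_j}{}\le 18\sqrt{k|\theta'-\theta|}+d$, whence $\norm{[W,e^{i\hat{P}_jt}]}{}\le|t|(18\sqrt{k|\theta'-\theta|}+d)$. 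Dividing by $k$ converts the $\sqrt{k}$ into $1/\sqrt{k}$, so the second summand is $\le 45|t|(18\delta+d/k)$; summing gives a bound of order $\delta|t|+d|t|/k$, comfortably inside the claimed $1224\delta+45d/k$.

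The main obstacle is precisely this apparent $\sqrt{k}$-growth of the telescoped quantity $\hat{P}_k-P(\theta')$, which naively would destroy the Lip$^{1/2}$ scaling of the final estimate. The resolution, as in \cite{HR}, is to spread the discrepancy uniformly along the smooth path $w(\tau)$: Lemma~\ref{up}(v) permits dividing by the path length $1/k$, and this $1/k$ combines with $\sqrt{k}$ to recover the correct $\delta=(|\theta'-\theta|/k)^{1/2}$ scale. A secondary technical point is ensuring $W$ can be placed inside an algebra $M$ with properly infinite commutant that also contains the intermediate one-parameter groups; this is handled by enlarging $M$ to contain the unitaries produced by the iterated application of Theorem~\ref{bounded} and invoking the permanence of proper infiniteness under such extensions.
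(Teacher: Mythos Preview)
Your overall strategy and the numerical estimates match the paper's proof: iterate Theorem~\ref{bounded} to produce the intermediate pairs $(\hat P_j,\hat Q_j)$, use Stone--von Neumann to find a conjugating unitary $W$ at the endpoint, then drag the intermediates along the smooth path from Lemma~\ref{up} sampled at $j/k$. The split of $e^{iP(s_{j+1})t}-e^{iP(s_j)t}$ into two summands and the use of Lemma~\ref{up}(v) to divide by $k$ are exactly what the paper does; your constants are even slightly better than the paper's $27k\delta+d$ because you estimate at the generator level.

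The genuine gap is in your treatment of (iii) and of the domain for Lemma~\ref{up}. You propose to ``arrange $W$ to lie in the von Neumann algebra $M$ generated by the four original one-parameter groups together with those produced by the iteration'' and then to invoke ``permanence of proper infiniteness under such extensions.'' This does not work: enlarging $M$ \emph{shrinks} $M'$, so proper infiniteness of the commutant is not preserved, and there is no mechanism forcing the outputs of Theorem~\ref{bounded} (which involves a free choice of unitary equivalence) to land in any prescribed subalgebra of $B(H)$. More seriously, once you apply Lemma~\ref{up}, the path $w(\tau)$ typically leaves $M$ altogether, so the final one-parameter groups $w_j e^{i\hat P_j t}w_j^*$ need not have properly infinite commutant.

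The paper resolves both issues with a single device: at the outset, use the hypothesis on the commutant to write $H=H_1\otimes H_2\otimes H_3$ with each factor infinite-dimensional and with $u_0,v_0,u_1,v_1$ all in $B(H_1)\otimes\mathbb{C}1\otimes\mathbb{C}1$. Then the entire iteration of Theorem~\ref{bounded} and the Stone--von Neumann unitary $W$ are carried out on $H_1$ alone, so automatically $W\in B(H_1)\otimes\mathbb{C}1\otimes\mathbb{C}1$, whose commutant inside $B(H_1)\otimes B(H_2)\otimes\mathbb{C}1$ is properly infinite. Lemma~\ref{up} is applied in this latter algebra, so the path $w(\tau)$ and hence all the $u_{s_j}(t),v_{s_j}(t)$ lie in $B(H_1)\otimes B(H_2)\otimes\mathbb{C}1$. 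The third tensor factor $B(H_3)$ then sits inside the commutant of everything, delivering (iii) for free. Replacing your hand-waved ``permanence'' step by this three-fold tensor decomposition makes the argument complete.
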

\begin{proof}
We can decompose $H=H_1\ten H_2\ten H_3$ as a tensor product of three infinite dimensional Hilbert spaces $H_1,H_2$ and $H_3$. Moreover we may assume the four one-parameter groups $\{u_0(t),u_1(t), v_0(t), v_1(t)\}$ are in the subalgebra $B(H_1)\ten \mathbb{C}1_{B(H_2)}\ten \mathbb{C}1_{B(H_3)}$, since commutant of $\{u_0(t),u_1(t), v_0(t), v_1(t)\}$ is properly infinite. Also, $P(\theta), Q(\theta) , P(\theta')$ and $Q(\theta') $ can be regarded as operators on $H_1$ by identifying $P$ with $P\ten 1_{B(H_2)}\ten 1_{B(H_3)}$.

Denote $\delta=({|\theta'-\theta|}/{k})^{\frac{1}{2}}$ and set $\bar{P}(s_0)= P(\theta) , \bar{Q}(s_0)= Q(\theta)$. We can
apply $\text{Theorem \ref{bounded}}$ inductively to obtain $k$ pairs of self-adjoint operators $(\bar{P}(s_j), \bar{Q}(s_{j}))$ on $H_1$ satisfying i)  and
\begin{align}\label{inductive}\max\{ \norm{\bar{P}(s_{j})-\bar{P}(s_j+1)}{},\norm{\bar{Q}(s_{j})-\bar{Q}(s_j+1)}{}\}\le 9 \delta\pl.\end{align}
By the assumption of $d$ and the triangle inequality, we have
\begin{align}\label{tri1}
\max\{\norm{\bar{P}(s_k)-P(\theta')}{}, \norm{\bar{Q}(s_k)-Q(\theta')}{}\}\le 9k\delta+d\pl .
\end{align}
Note that $s_k=\theta'$, both pairs $(\bar{P}(s_k), \bar{Q}(s_k))$ and $(P(\theta'),Q(\theta'))$ are infinite multiplicity representations of the Heisenberg relation with the nonzero parameter $\theta'$. Then $P(\theta')=WP(s_k)W^*, Q(\theta')=WQ(s_k)W^*$ for some unitary $W\in B(H_1)\ten \mathbb{C}1 \ten \mathbb{C}1$.
Thus \eqref{tri1} implies
\[\max\{\norm{\bar{P}(s_k)-W\bar{P}(s_k)W^*}{},\norm{\bar{Q}(s_k)-W\bar{Q}(s_k)W^*}{}\}\le 9k\delta+d\pl .\]
 Denote $\bar{u}_{s_j}(t)=e^{t\bar{P}(s_k)i}, \bar{v}_{s_j}(t)=e^{t\bar{Q}(s_k)i}$. By Proposition \ref{eq},
\[ \max\{\norm{ [w,\bar{u}_{s_k}(t)] }{}, \norm{ [w,\bar{v}_{s_k}(t)] }{}\}\le   ( 9k\delta+d)|t| \pl, \pl  \pl t\in \R\pl. \]
For any $j=0,1,\cdots,k$, by \eqref{inductive} and the triangle inequality we have
\[ \max\{\norm{ [w,\bar{u}_{s_j}(t)] }{}, \norm{ [w,\bar{v}_{s_j}(t)] }{}\}\le   ( 27k\delta+d)|t| \pl,   \pl \pl t\in \R \pl.\]
All of the operators above are in the subalgebra $B(H_1)\ten \mathbb{C}1 \ten \mathbb{C}1$, which is of properly infinite commutant inside $B(H_1)\ten B(H_2)\ten \mathbb{C}1$. Hence we can apply Lemma \ref{up} for $W$, to obtain a path of unitary $W: [0,1] \to B(H_1)\ten B(H_2)\ten \mathbb{C}1$ such that $W(0)=1, W(1)=I$, and
\[\max \{\norm{ \frac{d}{ds}W(s)u_{s_j}(t)W^*(s)}{}\pl , \pl \norm{ \frac{d}{ds}W(s)v_{s_j}(s)W^*(s)}{}\}\le 45 (27k\delta+d)|t| \pl , \]
for all $j=0,1,\cdots k$ and $t\in \R$. Now for each $j$, set
\[u_{s_j}(t)=W(\frac{j}{k}) \bar{u}_{s_j}(t)W^*(\frac{j}{k})\pl,  \pl v_{s_j}(t)=W(\frac{j}{k}) \bar{v}_{s_j}(t)W^*(\frac{j}{k}) \pl, \]
 and self-adjoint operators $P({s_j}), Q({s_j})$ as the associated infinitesimal generators. We claim that this gives the desired construction.

First, $P({s_0})=P(\theta), Q({s_0})=Q(\theta)$ and
 $P({s_k})=P(\theta'), Q({s_k})=Q(\theta')$. Each pair $(P(s_j), Q(s_j))$ satisfies the commutation relations i). Moreover,
 \begin{align*}
\norm{u_{s_{j}}(t)-u_{s_{j+1}}(t)}{}\le &\norm{W(\frac{j+1}{k})( \bar{u}_{s_{j+1}}(t)-\bar{u}_{s_j}(t))W^*(\frac{j+1}{k})}{} \\&+ \norm{\int_{\frac{j}{k}}^{\frac{j+1}{k}}\frac{d}{ds}(W(s) \bar{u}_{s_j}(t)W^*(s))ds}{}\\ \le &9\delta |t|+45 (27\delta+\frac{d}{k})|t|=(1224\delta+45\frac{d}{k})|t| \pl,
\end{align*}
 and the same bound holds for $\norm{v_{s_{j+1}}(t)-v_{s_{j}}(t)}{}$. By Lemma \ref{eq}, we obtain that
\[\max \{\norm{P(s_{j})-P(s_{j+1})}{}, \norm{Q(s_{j})-Q(s_{j+1})}{}\}\le 1224\delta+45\frac{d}{k}\pl .\]
Finally, all unitary groups $u_{s_j}(t), v_{s_j}(t)$ belong to $B(H_1)\ten B(H_2)\ten \mathbb{C}1$ and hence the commutant is properly infinite.
\end{proof}
\begin{rem}{\rm Note that for $(P(\theta),Q(\theta))$, we only use the fact that Theorem \ref{bounded} applies. Hence the above theorem remains valid if $\theta$ is $0$ and $(P(\theta),Q(\theta))$ is unitarily equivalent to an infinite multiple of left regular representation. This point is used in the next theorem.}
\end{rem}
Let us denote $S(H)$ as all self-adjoint operators on the Hilbert space $H$. Based on the above lemma, we construct maps $P,Q: \R \to S(H)$ with continuously bounded perturbation. The next theorem is an analog of Lemma 5.3 and Theorem 5.4 of \cite{HR} for Heisenberg relations.
\begin{theorem}\label{d2}
Let $H$ be an infinite dimensional Hilbert space. Then there exist maps $P, Q: \mathbb{R} \to S(H)$ and a universal constant $C>0$ such that
\begin{enumerate}
\item[i)]$[P(\theta),Q(\theta)]=-i\theta I$;
\item[ii)] $P(\theta)-P(\theta')$ and $Q(\theta)-Q(\theta')$ are bounded on $H$ and moreover,
\begin{align}\max\{\norm{P(\theta)-P(\theta')}{},\norm{Q(\theta)-Q(\theta')} {}\}\le C|\theta-\theta'|^{\frac12}\pl;\label{liphalf}
\end{align}
\end{enumerate}
for all $\theta, \theta'\in \R$.
\end{theorem}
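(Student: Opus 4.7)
The plan is to follow the two-step strategy of Lemma 5.3 and Theorem 5.4 of \cite{HR}, adapted to the Heisenberg relation and to the noncompact parameter space $\mathbb{R}$: first build the family $(P(\theta), Q(\theta))$ on a dense subset of $\mathbb{R}$ by iterated application of Lemma \ref{interpolates}, then extend to all of $\mathbb{R}$ using Proposition \ref{eq}. I fix once and for all an infinite-dimensional separable Hilbert space realized as an infinite tensor product $H \cong \bigotimes_{i \in \mathbb{N}} K_i$ with each $K_i$ infinite-dimensional, so that at every stage of the induction I can move the newly constructed one-parameter groups into a fresh tensor factor, keeping the commutant of the family built so far properly infinite. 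Start with an infinite-multiplicity left regular representation $(P(0), Q(0))$ of $\mathbb{R}^2$ acting on the first tensor factor only.

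Next, select coarse anchor points at $\theta \in \mathbb{Z}$. Rather than applying Theorem \ref{bounded} directly from $\theta = 0$ to each integer (which spoils Lip$^{\frac12}$ between two far-away integers), I apply Lemma \ref{interpolates} at the coarse scale: starting from the pair at $\theta = 0$ and a pair at $\theta = \pm N$ obtained via Theorem \ref{bounded} (bound $9\sqrt N$), subdividing the interval $[-N, N]$ into $2N$ unit pieces, I obtain $(P(n), Q(n))$ for $n \in \mathbb{Z} \cap [-N, N]$ with adjacent perturbation bound $1224 + 405/\sqrt{N}$ and, by passing to a weak-$*$ limit point as $N \to \infty$, a single family indexed by all integers with adjacent bound $O(1)$ and Lip$^{\frac12}$ direct bound between arbitrary $n, m$. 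Then, fix a large integer $k$ (any $k$ with $45/\sqrt{k} < 1$ works, e.g.\ $k = 2500$) and refine: on each unit interval $[n, n+1]$ iteratively apply Lemma \ref{interpolates} with $k$ subdivisions to produce a $k$-adic grid. Setting $d_m$ to be the adjacent perturbation bound at scale $1/k^m$, Lemma \ref{interpolates} gives the recursion $d_m \le 1224 k^{-m/2} + 45 d_{m-1}/k$; the substitution $e_m = k^{m/2} d_m$ yields $e_m = 1224 + (45/\sqrt{k})e_{m-1}$, which is a contraction, so $e_m$ stays uniformly bounded and $d_m \le C_0 k^{-m/2}$.

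For two $k$-adic points $\theta, \theta'$ in the same unit interval with $|\theta - \theta'| \in [k^{-(m+1)}, k^{-m}]$, at most $k$ consecutive scale-$(m+1)$ steps separate them, giving $\|P(\theta) - P(\theta')\| \le k \cdot C_0 k^{-(m+1)/2} \le C|\theta - \theta'|^{1/2}$; combined with the coarse-scale Lip$^{\frac12}$ bound between integer anchors, this gives the Lip$^{\frac12}$ estimate uniformly on the dense $k$-adic grid in $\mathbb{R}$. Finally, extending to irrational $\theta$: take $k$-adics $\theta_n \to \theta$; the Lip$^{\frac12}$ bound on the unitary groups $u_{\theta_n}(t), v_{\theta_n}(t)$ (obtained from the operator bound via the necessity direction of Proposition \ref{eq}) shows these are Cauchy in norm for $t$ in compact sets, so they converge to strongly continuous unitary groups satisfying the Weyl relation with parameter $\theta$; Stone's theorem produces the self-adjoint generators $(P(\theta), Q(\theta))$, and the sufficiency direction of Proposition \ref{eq} translates the Lip$^{\frac12}$ bound back to a bounded perturbation estimate of the required form.

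The main obstacle is the coarse-scale step: ensuring that the integer anchors themselves satisfy a Lip$^{\frac12}$ bound across arbitrarily large distances, not merely between adjacent integers, without losing properly infinite commutants. The limiting procedure as $N \to \infty$ in the coarse refinement requires the tensor-product Hilbert space setup to accommodate simultaneously all one-parameter groups, and one must verify that the weak-$*$ limit (or compactness) argument produces a single coherent family rather than separate families on each $[-N, N]$. A secondary technical point is the bookkeeping of the commutant hypothesis in Lemma \ref{interpolates} across the nested induction, which is handled by consuming one fresh tensor factor $K_i$ at each refinement level so that the newly produced interpolating unitaries always live in a subalgebra with properly infinite commutant.
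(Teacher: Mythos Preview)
Your overall strategy---build the family on a $k$-adic grid by iterated use of Lemma~\ref{interpolates}, control the adjacent-scale bounds by the recursion $d_m \le 1224\,k^{-m/2} + 45\,d_{m-1}/k$, and then extend to all of $\R$ via Proposition~\ref{eq}---is exactly the paper's approach, and your fine-scale argument is correct (the paper takes $k=8100$ so that $45/\sqrt{k}=1/2$ and the constant $2500$ reproduces itself, but any $k>45^2$ works as you say). The problem is entirely in your coarse-scale step. A single application of Lemma~\ref{interpolates} to $[0,N]$ with $N$ subdivisions only yields \emph{adjacent} bounds $\|P(n)-P(n{+}1)\|\le 1224+405/\sqrt{N}$; it does \emph{not} produce $\|P(n)-P(m)\|\lesssim |n-m|^{1/2}$ for arbitrary $n,m\in[0,N]$---the triangle inequality gives only $O(|n-m|)$, and tracing through the proof of Lemma~\ref{interpolates} shows no hidden improvement. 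Worse, the ``weak-$*$ limit as $N\to\infty$'' is not a legitimate construction: the families $(P_N(n),Q_N(n))$ for different $N$ are not nested, weak-$*$ limits of unitaries need not be unitary, the one-parameter group law and the Weyl relation do not pass through weak-$*$ limits, and norm bounds on \emph{differences} $\|u_N(n,t)-u_N(m,t)\|$ are not preserved under such limits. You flag this as a verification issue, but it is an actual failure of the method.

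The paper sidesteps the whole problem by a much simpler device: it defines $P(n)$ for \emph{all} integers $n$ at once by the explicit recursion $P(j{+}1)=w^*P(j)w$, $Q(j{+}1)=w^*(Q(j)+x)w$ with the fixed unitary $w$ from Theorem~\ref{bounded}, which gives $\|P(j)-P(j{+}1)\|\le 9$ directly and requires no limiting procedure. Then at stage $n$ of the induction one merely enlarges the window by adjoining the two new integer points $\pm(n{+}2)$ and refines the two fresh unit intervals down to scale $k^{-(n+1)}$ by $n{+}1$ successive $k$-subdivisions; the starting bound $9\le 2500$ feeds straight into the same recursion. In short, the integers are treated as the top level of the same $k$-adic hierarchy with a uniformly bounded adjacent step, so there is no separate ``coarse Lip$^{1/2}$'' obstacle at all, and no compactness or limit argument is needed. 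Your infinite-tensor-product bookkeeping of the commutant is also unnecessary: Lemma~\ref{interpolates} already outputs a family whose commutant is properly infinite (condition~iii)), so the induction is self-sustaining on a fixed separable $H$.
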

\begin{proof}
Set $k=8100$ and $\Gamma=\bigcup_{n=1}^{\infty} \Gamma_n $, where
\[\Gamma_n=\{\frac{j}{k^n}|\pl j\in \mathbb{Z},\pl |j|\le (n+1)k^n\}\pl.\]
Write $H=H_1\ten H_2 $ where both $H_1$ and $H_2$ are infinite dimensional. 
 Let $K$ be a separable infinite dimensional Hilbert space. We may assume $H_1=L_2(\R^2, K)$ and define the map $P,Q$ in $S(L_2(\R^2, K))$ for all integers $j\in \mathbb{Z}$ as follows
\begin{align*} &(P(0)f) (x,y)=-i\frac{\partial f}{\partial x}(x,y)\pl , \pl (Q(0)f)(x,y)=-i\frac{\partial f}{\partial y}(x,y)\pl ,\pl f \in C^1_c(\R^2, K)\pl, \\
&P(j+1)= w^* P(j)w\pl , \pl Q(j+1)=w^*(Q(j)+x)w\pl,
\end{align*}
where $w\in U(L_2(\R^2, K))$ is the unitary operator described in \eqref{w}. $\text{Theorem \ref{bounded}}$ implies
\[\max \{\norm{P(j) -P(j+1)}{}, \norm{Q(j) -Q(j+1)}{}\}\le 9 \pl.\]
Now identify $P(j)$ and $Q(j)$ with their amplifications
$P(j)\ten I$ and $Q(j)\ten I$ on $H_1\ten H_2$. Denote that $u(\theta,t)=e^{iP(\theta)t}$ and $v(\theta,t)=e^{iQ(\theta)t}$.
Then
$\{(P(j),Q(j)), j=-1,0,1\}$ defines the map on $\Gamma_0$ satisfying condition i) and ii) for constant $C'=2500$, and \\

iii) \emph{the commutant of $\{u(\theta, t), v(\theta, t)| \pl \theta \in \Gamma_n\}$ is properly infinite.}\\

\noindent Now assume that the maps $P, Q$ are defined on $\Gamma_n$ with conditions i), ii) and iii) satisfied. For the induction step, we first add two integer points $\theta=\pm (n+2)$, and then apply the Lemma \ref{interpolates} to the subintervals $[\frac{j}{k^n}, \frac{j+1}{k^n}]$ (note that at $j=0$, $(P(0),Q(0))$ is the left regular representation) and unit intervals ${[-(n+2), -(n+1)]}$, ${[n+1, n+2]}$. In particular, for the two unit intervals, we can apply the Lemma \ref{interpolates} of $k$-division $n+1$ times. Thus we extend the maps $P,Q$ to $\Gamma_{n+1}$ with i), ii) and iii) still satisfied. Indeed, for ii),
\begin{align*}\max\{\norm{P(\frac{j}{k^{n+1}})&-P(\frac{j+1}{k^{n+1}})}{},\norm{Q(\frac{j}{k^{n+1}})-Q(\frac{j+1}{k^{n+1}})}{}\}\\&\le 1224k^{-\frac{n+1}{2}}+45\frac{2500}{k} k^{-\frac{n}{2}}=(1224+2500\frac{45}{\sqrt{k}})k^{-\frac{n+1}{2}}\le 2500k^{-\frac{n+1}{2}}\pl,\end{align*}
where the last inequality follows from that $\sqrt{k}=90$. Thus by induction, we construct the maps $P,Q$ on $\Gamma$.

Finally, we extend $P,Q$ from the dense subset $\Gamma$ to $\R$. By Lemma \ref{eq}, the one-parameter unitary groups $u(\theta ,t)=e^{iP(\theta)t}, v(\theta, t)=e^{iQ(\theta)t}$ satisfy that
\[\max\{\norm{u(\theta ,t)-u(\theta' ,t)}{},\norm{v(\theta ,t)-v(\theta' ,t)}{}\}\le 2500|t|\cdot |\theta-\theta'|^{\frac12}\pl, \pl\pl t\in \R, \theta \in \Gamma \pl,\]
For a fixed $t$, we can continuously extend $u(\cdot, t),v(\cdot, t)$ for all $\theta\in \R$.
It can be proved by the same argument of Theorem 5.4 in \cite{HR} that this extension is again Lip$^{\frac12}$ continuous, but with a larger constant $C=320\cdot 2500 |t|= 800,000 |t|$. Namely, for all $t, \theta, \theta' \in \R$, our extension satisfies
\begin{align}
\label{lip} \max \{\norm{u(\theta ,t)-u(\theta' ,t)}{},\norm{v(\theta ,t)-v(\theta' ,t)}{}\}\le 800,000|t|\cdot |\theta-\theta'|^{\frac12}\pl,
\end{align}
The continuity implies that for each $\theta$, $u(\theta ,t)$ and $v(\theta ,t)$ are strongly continuous one-parameter unitary groups such that
\begin{align}u(\theta ,s)v(\theta ,t)=e^{ist\theta }v(\theta ,t)v(\theta ,s) \pl, \pl \forall  \pl s,t \in \R \pl.
\label{thetai} \end{align}
Finally we choose the self-adjoint operators $P(\theta)$ and $Q(\theta)$ as the infinitesimal generators of $u(\theta ,t)$ and $v(\theta ,t)$. By Lemma
\ref{eq}, ii) is satisfied with constant $C=800,000$.
\end{proof}

\begin{rem}{\rm Proposition 3.9 of \cite{HR} proves that if $(P,Q)$ is a representation of the Heisenberg relation of finite multiplicity on a Hilbert space $H$, there exists no strongly commuting pair $(P_0, Q_0)$ on $H$ such that $P-P_0$ and $Q-Q_0$ are bounded. The argument works for all $\theta\neq 0$, which implies that the $(P(\theta), Q(\theta))$ constructed above of a nonzero $\theta$ is a representation of infinite multiplicity. Also, at $\theta=0$ $(P(0),Q(0))$ is the regular representation of $\R^2$ of infinite multiplicity by construction. \label{theta0}}\end{rem}

The above theorem can be reformulated with one-parameter unitary groups. 
\begin{cor}\label{continuousd2}
Let $H$ be an infinite dimensional Hilbert space. There exist two maps $u,v: \R\times \R \to U(H)$ and a universal constant $C>0$ such that
\begin{enumerate}
\item[i)]for each $\theta \in\R$, $u(\theta, \cdot)$ and $v(\theta,\cdot)$ are strongly (strong operator topoloy) continuous one-parameter unitary groups satisfying
   \begin{align}\label{thetagroup}u(\theta,s)v(\theta,t)=e^{ist\theta}v(\theta,t)u(\theta,s)\pl, \pl s,t \in \R\pl,\end{align}
\item[ii)]for each $t\in \R$, $u(\cdot, t)$ and $v(\cdot, t)$ are Lip$^{\frac{1}{2}}$ continuous,
\[\max\{\norm{u(\theta,t)-u(\theta',t)}{} and \norm{v(\theta,t)-v(\theta',t)}{}\}\le C|t||\theta-\theta'|^{\frac12} \pl,  \pl\forall \pl\theta, \theta' \in \R.\]
\end{enumerate}Moreover, for all $\theta$, $u(\theta, \cdot)$ and $ v(\theta,\cdot)$ are representations of \eqref{thetagroup} of infinite multiplicity.
\end{cor}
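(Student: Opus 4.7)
The plan is to simply package Theorem \ref{d2} in terms of the associated unitary groups rather than their infinitesimal generators. Given the maps $P,Q\colon\R\to S(H)$ supplied by Theorem \ref{d2}, I would define
\[ u(\theta,t):=e^{iP(\theta)t}\pl,\pl v(\theta,t):=e^{iQ(\theta)t}\pl,\pl \theta,t\in\R\pl. \]
Stone's theorem makes each $u(\theta,\cdot)$ and $v(\theta,\cdot)$ automatically a strongly continuous one-parameter unitary group, so part of (i) is immediate.

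The Weyl relation in (i) is nothing but the statement that $P(\theta)$ and $Q(\theta)$ satisfy the Heisenberg relation with parameter $\theta$, which was established in Theorem \ref{d2}(i). Concretely, since $[P(\theta),Q(\theta)]=-i\theta I$ by construction, the equivalence between \eqref{Heisenberg} and \eqref{weyl} recorded at the beginning of Section 2 gives the desired commutation $u(\theta,s)v(\theta,t)=e^{ist\theta}v(\theta,t)u(\theta,s)$ for every $s,t\in\R$. For $\theta=0$ this is the usual commuting-groups relation for the regular representation of $\R^2$, which is already built in by Remark \ref{sc}.

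For part (ii), I would invoke the ``necessity'' direction of Proposition \ref{eq}: if $P(\theta)-P(\theta')$ is a bounded operator of norm at most $C|\theta-\theta'|^{1/2}$ (as supplied by \eqref{liphalf}), then
\[ \norm{u(\theta,t)-u(\theta',t)}{}=\norm{e^{iP(\theta)t}-e^{iP(\theta')t}}{}\le C|t|\cdot|\theta-\theta'|^{\frac12}\pl, \]
and analogously for $v$. This gives (ii) with the same universal constant as in Theorem \ref{d2}.

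The final claim about infinite multiplicity is handled by Remark \ref{theta0}: at every $\theta\neq 0$, Proposition 3.9 of \cite{HR} rules out a finite-multiplicity representation here, because the constructed $(P(\theta),Q(\theta))$ admits a bounded perturbation to a strongly commuting pair; and at $\theta=0$ the regular representation \eqref{R2} is used in the construction, which is itself of infinite multiplicity. There is no real obstacle in this proof — everything has been done in Theorem \ref{d2} and the supporting lemmas; the corollary is essentially a change of viewpoint from generators to unitary groups, with the only mild point being to cite Proposition \ref{eq} in the correct direction to convert the generator estimate \eqref{liphalf} into the stated norm bound on the unitary groups.
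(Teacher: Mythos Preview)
Your proposal is correct and matches the paper's approach: the corollary is stated without a separate proof as a direct reformulation of Theorem \ref{d2}, and the passage from generators to unitary groups via Proposition \ref{eq} together with Remark \ref{theta0} for infinite multiplicity is exactly the content being invoked. In fact the proof of Theorem \ref{d2} already constructs $u(\theta,t),v(\theta,t)$ explicitly (see \eqref{lip}, \eqref{thetai}) before extracting $P(\theta),Q(\theta)$ as their generators, so your exponentiation step simply reverses that last move.
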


\section{Perturbations of Noncommutative Euclidean Space}
We now consider the case of dimension $d>2$.
From this section on, ${\theta=(\theta_{jk})_{j,k=1}^d}$ denotes a real skew-symmetric $d \times d$-matrix. Let $(P_1,P_2,\cdots,P_d)$ be a $d$-tuple of self-adjoint operators on a Hilbert space $H$. We say $(P_1,P_2,\cdots,P_d)$ satisfy the \emph{Heisenberg relations with parameter $\theta$}
\[ [P_j,P_k]=-\theta_{jk}I\pl ,\pl j,k=1,2,\cdots,d\pl \]
if the one-parameter unitary groups $u_1(s)=e^{iP_1s},u_2(s)=e^{iP_2s},\cdots ,u_d(s)=e^{iP_ds}$ satisfy
\begin{align}u_j(s)u_k(t)=e^{i{st}\theta_{jk}} u_k(t)u_j(s)\pl,  \pl s,t \in \mathbb{R},\pl j,k=1, 2, \cdots, d \pl .
\label{qes1}\end{align}
When $\theta$ is the zero matrix, $(P_1,P_2, \cdots , P_j)$ gives a unitary representation of $\R^d$. The left regular representation of $\R^d$ is the translation action on $L_2(\R^d)$ as follows,
\[P_jf=-i\frac{\partial f}{\partial x_j} \pl ,\pl  (u_j(t) f)(x_1,x_2, \cdots, x_d)=f(x_1,x_2, \cdots, x_j+t, \cdots, x_d) \pl.\]
The standard noncommutative case is that $d=2n$ and $\theta=\left[ {\begin{array}{cc} 0 & I_n \\ -I_n & 0 \end{array} } \right]$, where $I_n$ is the $n$-dimensional identity matrix. This gives the conanical commutation relations (CCR) which consists of $n$ pairs of Heisenberg relations that mutually commute, i.e.
\begin{align} [P_{j}, P_{j+n}]=-iI\pl, \pl \forall \pl 1\le j\le n\pl\pl, \pl \text{otherwise}  \pl\pl [P_j, P_k]=0 \pl  .
\label{standard}\end{align}
The Stone-von Neumann theorem applies here (cf. \cite{Hall}, Theorem 14.8): any irreducible representation of \eqref{standard} is unitarily equivalent to $n$-dimensional quantum mechanics model on $L_2(\R^n)$,
\begin{align}P_{j}f= -i\frac{\partial f}{\partial x_j}\pl , \pl P_{j+n}f (x_1,\cdots,x_n)= x_jf (x_1,\cdots,x_n) \pl , \pl j=1, \cdots, n\pl; \label{mp}\end{align}
any representation of \eqref{standard} is unitarily equivalent to a finite or infinite multiple of the irreducible representation. It is known that similar property holds for all nonsingular $\theta$, which we briefly discuss in the following.

We use boldface letters for real vectors such as $\bs=(s_1,s_2,\cdots,s_d)$. Given a $d$-tuple $(u_1,u_2,\cdots, u_d)$ of one-parameter unitary groups satisfying \eqref{qes1}, we introduce the following strongly continuous map
\[u: \R^d \to U(H)\pll,\pll u(\bs)=\exp(-\frac{i}{2}\sum_{j<k}\theta_{jk}s_js_k)u_1(s_1)u_2(s_2)\cdots u_d(s_d) \pl.\]
It satisfies the commutation relation
\begin{align}\label{proj}u(\bs)u(\bt)=e^{\frac{i}{2}\theta(\bs,\bt)}u(\bs+\bt)=e^{i\theta(\bs,\bt)}u(\bt)u(\bs) \pl, \pl \bs,\bt \in \R^d \pl,\end{align}
where $\theta(\bs,\bt)=\sum_{jk}\theta_{jk}s_jt_k$ is the symplectic bilinear form associated with $\theta$. \eqref{proj} is called a projective unitary representation of $\R^d$ (see Appendix for the definition), and
it is an equivalent formulation of \eqref{qes1}. Let $T=(T_{jk})^{d}_{j,k=1}$ be a real invertible matrix and $T^t$ be its transpose. Then $\tilde{\theta}=T\theta T^{t}$ is also a skew-symmetric real matrix, and its associated Heisenberg relation admits the following representation,
\[u(T{\bf s})u(T{\bf t})=e^{i\theta(T\bs,T\bt)}u(T{\bf t})u(T{\bf s})=e^{\tilde{\theta}({\bf s},{\bf t})}u(T{\bf t})u(T{\bf s}) \pl.\]
Since $T$ is invertible, the Heisenberg relations associated with $\theta$ and associated with $\tilde{\theta}$ generate each other and there is an one-to-one correspondence bewteen their representations. Let us fix $S=\left[ {\begin{array}{cc} 0 & I_n \\ -I_n & 0 \end{array} } \right]$. For a nonsingular $\theta$, there exists an invertible $T$ such that $T\theta T^{t}=S$. Hence the Stone-von Neumann theorem concludes the case for all nonsingular $\theta$.

\begin{prop}\label{ns} Let $d=2n$. Suppose $\theta$ is nonsingular and $T=(T_{jk})^{d}_{j,k=1}$ is a real invertible matrix such that $T\theta T^t=S$. 
Then any irreducible representation of
\begin{align}[P_j,P_k]=-i\theta_{jk}I \pl, \pl j,k =1,2,\cdots,d \label{re}\end{align}
is unitarily equivalent to the following representation on $L_2(\mathbb{R}^{n})$,
\begin{align} \label{nonsingular}
P_j=\sum_{1\le k\le n} (T_{j,k} (-i\pd{x_k})+T_{j,k+n} x_k)\pl, \pl\pl\pl\pl j=1,2,\cdots,  d\pl.
\end{align} Moreover, any representation of \eqref{re} is a (finite or infinite) multiple of \eqref{nonsingular}.
\end{prop}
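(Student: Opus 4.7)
The plan is to reduce the general nonsingular case to the standard CCR case (where the matrix is $S$) by the linear change of variables induced by $T$, and then invoke the Stone--von Neumann theorem exactly as stated in \cite{Hall}. The key point is that the reduction should be performed at the level of the Weyl form \eqref{proj} — i.e. on the projective unitary representation $u:\R^d\to U(H)$ — rather than at the level of the unbounded generators, so that no domain questions intervene in the transport step.

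First, starting from an arbitrary representation $(P_1,\ldots,P_d)$ of $[P_j,P_k]=-i\theta_{jk}I$, form the associated projective representation $u(\bs)=\exp(-\tfrac{i}{2}\sum_{j<k}\theta_{jk}s_js_k)u_1(s_1)\cdots u_d(s_d)$ of $\R^d$, which satisfies $u(\bs)u(\bt)=e^{i\theta(\bs,\bt)}u(\bt)u(\bs)$ as in \eqref{proj}. Next, define $\tilde u(\bs):=u(T\bs)$; by the computation displayed in the paragraph preceding the proposition, $\tilde u$ is a projective unitary representation of $\R^d$ whose cocycle is $\tilde\theta(\bs,\bt)=\theta(T\bs,T\bt)$, and the hypothesis $T\theta T^t=S$ gives $\tilde\theta=S$. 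Hence the one-parameter subgroups $\tilde u_j(t):=\tilde u(te_j)$ satisfy the standard Weyl relations, and their Stone-theorem infinitesimal generators $\tilde P_1,\ldots,\tilde P_d$ furnish a representation of the canonical commutation relations \eqref{standard}.

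Now invoke the Stone--von Neumann theorem in the form of \cite{Hall}, Theorem 14.8: any irreducible representation of \eqref{standard} is unitarily equivalent to the $n$-dimensional Schr\"odinger representation \eqref{mp} on $L_2(\R^n)$, and any representation is a (finite or infinite) multiple of the irreducible one. Apply this to $(\tilde P_1,\ldots,\tilde P_d)$ to obtain a unitary $U:H\to L_2(\R^n)^{\oplus m}$ (for some cardinality $m$) intertwining $\tilde P_k$ with the amplification of the Schr\"odinger generators, which we denote $-i\partial/\partial x_k$ for $1\le k\le n$ and $x_{k-n}$ for $n<k\le 2n$. Finally, read off $P_j$ from $\tilde P_k$: since $\tilde u_j(t)=u(tTe_j)$, one reverses the linear change of variables to express each $P_j$ as the prescribed linear combination $\sum_{1\le k\le n}(T_{j,k}(-i\partial/\partial x_k)+T_{j,k+n}x_k)$ of the Schr\"odinger generators (on the common core provided by the Schwartz space $\mathcal S(\R^n)$), yielding \eqref{nonsingular}. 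The multiplicity assertion transfers from the Schr\"odinger side through $U$ without change.

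The main (minor) technical obstacle is the translation between the Weyl picture and the generator picture in the last step: one must verify that the linear combination on the right-hand side of \eqref{nonsingular} is indeed essentially self-adjoint on a common core for all the participating Schr\"odinger operators, and that its closure agrees with the generator of the one-parameter unitary group obtained from $\tilde u$ after inverting the change of variables. Both facts are classical for the Schr\"odinger representation — the Schwartz space $\mathcal S(\R^n)$ is invariant under multiplication by polynomials in $x$ and under $-i\partial/\partial x_k$, and Nelson's analytic vector theorem applied there settles the essential self-adjointness — so the argument reduces to bookkeeping in linear algebra once the transport step is in place.
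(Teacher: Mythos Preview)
Your proposal is correct and follows essentially the same approach as the paper: the paper does not give a separate formal proof of this proposition but rather presents the argument in the paragraph immediately preceding it, reducing to the standard CCR via the linear change of variables $\tilde u(\bs)=u(T\bs)$ at the Weyl-form level and then invoking the Stone--von Neumann theorem. Your added remarks on essential self-adjointness and the common core $\mathcal S(\R^n)$ make explicit what the paper leaves implicit.
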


In general, for a singular $\theta$ the representation always generates a tensor product of Type I factor and commutative algebra (see \cite{Rieffel93}).
The next theorem is the generalization of Theorem \ref{bounded} for dimension $d\ge 2$.
\begin{theorem}\label{boundedd} Let $\theta$ be nonsingular. Let $(P_1, P_2,\cdots, P_d)$ be a representation of
 \[[P_j,P_k]=-i\theta_{jk}I\pl\pl, \pl\pl j,k=1,2,\cdots, d\pl\]
 on a separable Hilbert space $H$ of infinite multiplicity. Then for any real skew-symmetric $\theta'$, there exist self-adjoint operators $P'_1, P'_2,\cdots, P'_d$ on $H$ such that for all $j,k$,
\begin{enumerate}
\item[i)] $[P'_j, P'_k]=-i\theta_{jk}'I$;
\item[ii)]$P_j-P_j'$ is bounded on $H$ and \[\norm{P_j-P'_j}{}<9(d-1)\max_{ k } |\theta_{jk}-\theta_{jk}'|^{\frac{1}{2}}\pl.\]
\end{enumerate}
\end{theorem}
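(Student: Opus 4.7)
The plan is to reduce to the two-dimensional case of Theorem \ref{bounded} by modeling the given representation as a tensor product over unordered pairs $\{j,k\}$, exactly in the spirit of the unitary construction \eqref{tensor}.

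Since $\theta$ is nonsingular, Proposition \ref{ns} says that any two infinite-multiplicity representations of $[P_j,P_k]=-i\theta_{jk}I$ on a separable Hilbert space are unitarily equivalent, so I may replace $(P_1,\dots,P_d)$ by any convenient model. For each pair $1\le j<k\le d$, choose a separable infinite dimensional Hilbert space $H_{jk}$ and a pair $(P^{jk},Q^{jk})$ on $H_{jk}$ satisfying $[P^{jk},Q^{jk}]=-i\theta_{jk}I$; take it of infinite multiplicity when $\theta_{jk}\neq 0$, and as an infinite multiple of the regular representation \eqref{R2} when $\theta_{jk}=0$. Set $H=\bigotimes_{j<k}H_{jk}$ and
\[P_j\;=\;\sum_{k>j} I\otimes\cdots\otimes P^{jk}\otimes\cdots\otimes I \;+\;\sum_{k<j} I\otimes\cdots\otimes Q^{kj}\otimes\cdots\otimes I,\]
with each summand placed in the pair-slot it names. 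Any two distinct $P_j,P_l$ share only the slot $\{j,l\}$, where they contribute the pair $(P^{jl},Q^{jl})$ (or its flip, depending on the order of $j,l$), so $[P_j,P_l]=-i\theta_{jl}I$; all remaining slots commute. The commutant contains, up to tensoring with identities, the commutants of every pair, which are infinite dimensional by construction, so the tensor model is of infinite multiplicity and is therefore unitarily equivalent to the given $(P_1,\dots,P_d)$.

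Next, apply Theorem \ref{bounded} together with Remark \ref{sc} in each pair-slot to produce self-adjoint $(P'^{jk},Q'^{jk})$ on $H_{jk}$ with $[P'^{jk},Q'^{jk}]=-i\theta'_{jk}I$ and
\[\max\{\|P^{jk}-P'^{jk}\|,\;\|Q^{jk}-Q'^{jk}\|\}\;\le\;9\,|\theta_{jk}-\theta'_{jk}|^{1/2}.\]
Define $P'_j$ by the same tensor formula as $P_j$, using the primed operators. The slot bookkeeping gives $[P'_j,P'_k]=-i\theta'_{jk}I$, establishing i). For ii), the difference $P_j-P'_j$ is a sum over $k\neq j$ of bounded operators supported on pairwise disjoint slots, each of norm at most $9\,|\theta_{jk}-\theta'_{jk}|^{1/2}$; summing over the $d-1$ values of $k$ gives the bound
\[\|P_j-P'_j\|\;\le\;\sum_{k\neq j}9\,|\theta_{jk}-\theta'_{jk}|^{1/2}\;\le\;9(d-1)\max_{k}|\theta_{jk}-\theta'_{jk}|^{1/2}.\]

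The main obstacle is exactly the pair-slots where $\theta_{jk}=0$ but $\theta'_{jk}\neq 0$ (or vice versa), because Theorem \ref{bounded} was stated for a nonzero input parameter. This is handled by Remark \ref{sc}: arranging those slots as infinite multiples of the regular representation of $\R^2$ puts us inside the hypothesis of that remark, and Theorem \ref{bounded}'s construction then delivers the required perturbation to any target $\theta'_{jk}\in\R$. A secondary routine point is to verify that the tensor model really is of infinite multiplicity so that Proposition \ref{ns} legitimately replaces the given $(P_1,\dots,P_d)$ by it; this follows from inspecting the commutant as indicated.
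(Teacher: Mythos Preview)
Your argument is correct, but it follows a genuinely different route from the paper's own proof.

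The paper works concretely on $L_2(\R^d,K)$ with the realization
\[
P_j=\Big(-i\delta_j\frac{\partial}{\partial x_j}+\sum_{k<j}\frac{\theta_{kj}}{\delta_k}x_k\Big)\otimes 1_K
\]
and builds a single pointwise unitary $W(x_1,\dots,x_d)$ as a product of the two-variable functions $w(x_j,\delta_{jk}x_k)$ over all pairs $j<k$. The new operators are obtained by conjugating $P_j$ plus a linear correction by $W$, and the norm estimates come from bounding the partial derivatives $\partial W/\partial x_j$ via \eqref{w}. Thus the paper stays on a space with only $d$ real coordinates and encodes the $\binom{d}{2}$ pairwise interactions through the multiplier terms.

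Your approach instead transplants the tensor construction \eqref{tensor} from the tori setting to the unbounded one: you model the representation on $\bigotimes_{j<k}H_{jk}$, with one independent two-dimensional slot per unordered pair, and invoke Theorem~\ref{bounded} (together with Remark~\ref{sc} for the $\theta_{jk}=0$ slots) as a black box in each slot. This is cleaner and more modular; it yields the slightly sharper intermediate bound $\|P_j-P'_j\|\le 9\sum_{k\neq j}|\theta_{jk}-\theta'_{jk}|^{1/2}$ before passing to $9(d-1)\max_k$. The price is that the model Hilbert space has $\binom{d}{2}$ tensor factors rather than the paper's more economical $L_2(\R^d,K)$, and one must appeal to Proposition~\ref{ns} (which is legitimate since nonsingularity of $\theta$ forces $d$ to be even) to identify the two. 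Both arguments ultimately rest on the same two-variable unitary $w$ of \eqref{w}; they differ only in how the $\binom{d}{2}$ copies are assembled.
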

\begin{proof}Let $K$ be a separable infinite dimensional Hilbert space. Set $\displaystyle \delta_j=\max_{k}  |\theta_{jk}-\theta_{jk}'|^{1/2}$. Let us first assume that $\delta_j>0$ for every $j$. By Proposition \ref{ns}, up to a unitary equivalence we may assume that $H=L_2(\R^{d}, K)$ and $(P_1,P_2,\cdots, P_d)$ are given by
\begin{align}\label{rp}P_j=(-i\delta_j\pd{x_j}+\sum_{k<j}\frac{\theta_{kj}}{\delta_k}x_k)\ten 1_K ,\end{align}
where $(-i\pd{x_j})$'s and $x_j$'s are given in \eqref{mp}.  Let $W: \mathbb{R}^d\to U(K)$ be a $C^1$-function with values in the unitary group $U(K)$. $W$ can be viewed as a unitary on $L_2(\R^{d}, K)$ via pointwise action. A calculation  similar  to \eqref{cal} yields
\[W^*P_jW=-i\delta_j(\pd{x_j}+W^*\frac{\partial W}{\partial x_j})+\sum_{ k<j}\frac{\theta_{kj}}{\delta_k}x_k
 \pl .\]
Let us recall the two-variable $C^1$-function $w: \mathbb{R}^2 \to U(K)$  in Theorem \ref{bounded}. Write ${\delta_{jk}= \frac{\theta_{jk}'-\theta_{jk}}{\delta_j\delta_k}}$ and define the following functions,
\begin{align*}
w_2(x_1,x_2,\cdots,x_d)&=w(x_1, \delta_{12}x_2)\pl,\\
w_3(x_1,x_2,\cdots,x_d)&=w(x_1,\delta_{13}x_3)w(x_2,\delta_{23}x_3)\pl ,\\
&\vdots\\
w_d(x_1,x_2,\cdots,x_d)&=w(x_1,\delta_{1d}x_d)w(x_2,\delta_{2d}x_d)\cdots w(x_{d-1},\delta_{d-1,d}x_d)\pl.
\end{align*}
 When $1\le j< k$,  for any $(x_1,x_2,\cdots,x_d)\in \R^d$
\begin{align}\label{kj}
\norm{\frac{\partial w_k}{\partial x_j}(x_1,x_2,\cdots,x_d)}{}=\norm{\frac{\partial w}{\partial x}(x_j,\delta_{jk}x_k)}{}<9 \pl,
\end{align}
and when $k< j \le d$, $\frac{\partial w_k}{\partial x_j}=0$. Because the pointwise unitaries $w(x_k, \delta_{kj}x_j)$ commutes with the multipliers $x_{j}$, we have
\begin{align}\label{jj}
&\frac{\partial w_j}{\partial x_j}(x_1,x_2,\cdots,x_d)-\sum_{k<j}\delta_{kj}x_kw_j(x_1,x_2,\cdots,x_d) \nonumber\\
=&\sum_{k<j}\delta_{kj}w(x_1,\delta_{1j}x_j)\cdots\frac{\partial w}{\partial x}(x_k,\delta_{kj}x_j)\cdots w(x_{j-1},\delta_{j-1,j}x_j)-\sum_{k<j}\delta_{kj}x_kw_j(x_1,x_2,\cdots,x_d) \nonumber\\
=&\sum_{k<j} \delta_{kj}w(x_1,\delta_{1j}x_j)\cdots(\frac{\partial w}{\partial y}(x_k,\delta_{kj}x_j)-ix_kw(x_k,\delta_{kj}x_j))\cdots w(x_{j-1},\delta_{j-1,j}x_j)
\pl.
\end{align}
Thus the norm estimate follows
\begin{align}
\norm{\frac{\partial w_j}{\partial x_j}(x_1,x_2,\cdots,x_d)-\sum_{k<j}\delta_{kj}x_kw_j(x_1,x_2,\cdots,x_d)}{}\le 9\sum_{k<j} \delta_{kj}.
\end{align}
Now set \[W(x_1,x_2,\cdots x_d)=w_2(x_1,x_2,\cdots,x_d)w_3(x_1,x_2,\cdots,x_d)\cdots w_d(x_1,x_2,\cdots,x_d) \pl.\]
and define $\displaystyle P'_j=W^*(P_j +\sum_{1\le k<j}\frac{\theta_{kj}'-\theta_{kj}}{\delta_k}x_k)W$. Then $(P'_1, P'_2, \cdots ,P'_d)$ satisfies \\${[P'_j, P'_k]=-i\theta_{jk}I}$ and for each $j$
\begin{align*}
P_j-P'_j=i(\delta_j W^*\frac{\partial W}{\partial x_j}-i\sum_{1\le k<j}\frac{\theta_{kj}'-\theta_{kj}}{\delta_k}x_k)=i\delta_j(W^*\frac{\partial W}{\partial x_j}-i\sum_{1\le k<j} \delta_{kj} x)\pl.
\end{align*}
Note that for all $(x_1,x_2,\cdots,x_d)\in \R^d$
\begin{align*}
&\norm{\frac{\partial W}{\partial x_j}(x_1,x_2,\cdots,x_d)-\sum_{1\le k<j} \delta_{kj} x_kW(x_1,x_2,\cdots,x_d)}{}\\ \le &\norm{\frac{\partial w_j}{\partial x_j}(x_1,x_2,\cdots,x_d)-\sum_{1\le k<j} \delta_{kj} x_kw_j(x_1,x_2,\cdots,x_d)}{}+
\sum_{ j<k \le d}
\norm{\frac{\partial w_k}{\partial x_j}(x_1,x_2,\cdots,x_d)}{}\\ \le& 9\sum_{1\le k<j} \delta_{kj}+9(j-1)=9(j-1)+9(d-j)=9(d-1)\pl.
\end{align*}
Therefore, $P_j-P'_j$ is bounded on $H$ and
$\norm{P_j-P'_j}{}\le 9 (d-1)\delta_j$.

For a general $\theta'$, we may assume that $\delta_j>0$ for $j\le s$ and $\delta_j=0$ for  $s<j\le d$. Then we take the representation \eqref{rp} for $j\le s$ and use
\[P_j=(-i\pd{x_j}+\sum_{k\le s}\frac{\theta_{kj}}{\delta_k}x_k + \sum_{ s< k\le d}\theta_{jk}x_k)\ten 1_K,\]
for $s<j\le d$. Applying the above argument to $P_1, \cdots, P_s$, we obtain
$${\displaystyle P'_j=W^*(P_j +\sum_{ k<j}\frac{\theta_{kj}'-\theta_{kj}}{\delta_k}x_k)W \pl\pl , \pl\pl j \le s}\pl.$$
Note that now the pointwise unitary $W$ is independent of coordinates $x_{s+1}, \cdots, x_d$, and hence it commutes with the newly defined $P_{s+1}, \cdots, P_d$. One can verify that the $d$-tuple ${(P'_1, \cdots, P'_s, P_{s+1},\cdots, P_{d})}$ satisfies the desired conditions.
\end{proof}
The next proposition is a partial converse of above theorem. The proof is a natural generalization of the Proposition 3.7 in \cite{HR}.
\begin{prop}\label{converse}Let $\theta$ be nonsingular and $(P_1,P_2,\cdots,P_d)$  be a representation of  \[[P_j,P_k]=-i\theta_{jk}I\pl\pl, \pl\pl j,k=1,2,\cdots, d\pl\] on a Hilbert space $H$. If $(P_1,P_2,\cdots,P_d)$ is of finite multiplicity, then there exist no strongly commuting self-adjoint operators $(P_1',P_2',\cdots, P_d')$ on $H$ such that $P_j-P_j'$ is bounded on $H$ for all $j$.
\end{prop}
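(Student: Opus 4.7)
The plan is to mimic the proof of Proposition 3.7 in \cite{HR}, with Proposition \ref{ns} supplying the higher-dimensional normal form. Suppose for contradiction that strongly commuting self-adjoint $(P_1',\ldots,P_d')$ exist with $A_j:=P_j-P_j'$ bounded of norm $\le M$. Since $\theta$ is nonsingular and $(P_1,\ldots,P_d)$ has finite multiplicity, Proposition \ref{ns} lets us assume $d=2n$ and $H=L_2(\R^n)\otimes K$ with $\dim K<\infty$, where $(P_1,\ldots,P_d)$ realizes the Schr{\"o}dinger representation \eqref{nonsingular} tensored by $I_K$. The commuting family $(P_j')$ admits a joint projection-valued measure $E$ on $\R^d$, and by Proposition \ref{eq} we have uniform Lipschitz estimates $\|e^{itP_j}-e^{itP_j'}\|\le M|t|$.

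The first step is to show the joint spectrum of $(P_1',\ldots,P_d')$ is purely atomic with finite multiplicities. For $R>0$ let $E_R:=E([-R,R]^d)$. Any unit vector $\xi\in E_RH$ satisfies $\|P_j'\xi\|\le R$, so $\|P_j\xi\|\le R+M$ and $\sum_j\|P_j\xi\|^2\le d(R+M)^2$. In the Schr{\"o}dinger normal form the quadratic form $\sum_jP_j^2$ dominates the harmonic oscillator Hamiltonian $H_{\text{osc}}$ on $L_2(\R^n)$ up to an invertible linear change of canonical coordinates (using invertibility of the matrix $T$ from Proposition \ref{ns}), so every $\xi\in E_RH$ has energy expectation $\langle H_{\text{osc}}\xi,\xi\rangle\le C(R+M)^2$. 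Since $E_{[0,C']}(H_{\text{osc}})$ has finite rank on $L_2(\R^n)\otimes K$, a Chebyshev-type estimate shows the orthogonal projection $E_RH\to E_{[0,2C']}(H_{\text{osc}})H$ is injective, yielding $\dim(E_RH)<\infty$. Hence the joint spectrum consists of countably many atoms $\{\bs_i\}$ with finite multiplicities $m_i$, and $H=\bigoplus_iH_i$ with $H_i:=E(\{\bs_i\})H$ finite-dimensional.

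The second step is the Heisenberg identity on each $H_i$. Since $P_j'|_{H_i}=s_{i,j}I_{H_i}$ and $P_j'$ commutes with $E_i$, a direct computation using $[P_j',P_k']=0$ gives $E_i[P_j,P_k]E_i=E_i[A_j,A_k]E_i$, and combined with $[P_j,P_k]=-i\theta_{jk}I$ this yields $E_i[A_j,A_k]E_i=-i\theta_{jk}E_i$ for every atom. Taking traces on $H_i$ and summing over $\bs_i\in[-R,R]^d$ produces
\[
\mathrm{tr}\bigl(E_R[A_j,A_k]E_R\bigr)=-i\theta_{jk}\,\mathrm{rank}(E_R).
\]
The contradiction is extracted by rewriting the left-hand side via cyclicity as $\mathrm{tr}(A_j[A_k,E_R])$ and showing that the Schatten-$1$ norm $\|[A_k,E_R]\|_1$ grows strictly slower than $\mathrm{rank}(E_R)$ as $R\to\infty$: the Lipschitz bound localizes the off-diagonal blocks $E_lA_kE_i$ to pairs of atoms $(\bs_i,\bs_l)$ with $|s_{l,k}-s_{i,k}|$ bounded, so only a thin shell near the boundary of $[-R,R]^d$ contributes to $[A_k,E_R]$. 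The resulting surface-to-volume ratio forces $|\theta_{jk}|=0$ for every $j,k$, contradicting the nonsingularity of $\theta$.

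The main obstacle is the quantitative localization of $E_lA_kE_i$ in terms of the spectral distance $|s_{l,k}-s_{i,k}|$. This follows from a Fourier-integral representation: the matrix elements of $(e^{itP_k}-e^{itP_k'})$ between $H_i$ and $H_l$ (for $l\neq i$) equal those of $e^{itP_k}$ alone, since $e^{itP_k'}|_{H_i}=e^{its_{i,k}}I_{H_i}$ sends $H_i$ into itself; integrating against a Schwartz function $f$ with $\hat f$ supported away from $s_{l,k}-s_{i,k}$ and applying the Lipschitz bound gives decay of the block $E_lA_kE_i$ when $|s_{l,k}-s_{i,k}|$ is large. This quantitative Fourier-analytic estimate is the $d$-dimensional analog of the decisive step in \cite{HR} Proposition 3.7, and once in hand the surface-to-volume counting concludes the proof.
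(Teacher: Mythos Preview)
Your approach diverges substantially from the paper's, and the final step has a genuine gap.

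The paper's proof is a Fredholm index argument. After reducing to the Schr{\"o}dinger model on $L_2(\R^n)\otimes\C^M$ (with $d=2n$), it forms the Clifford--annihilation operator $A=\sum_{j=1}^n c_j\otimes a_j^*$, where $c_j$ are self-adjoint Clifford generators and $a_j=\tfrac{1}{\sqrt2}(P_j-iP_{j+n})$. One checks $A^*A=1\otimes\sum_ja_ja_j^*$ has compact inverse and $\ker A^*$ is spanned by the Gaussian, so the phase $V=A|A|^{-1}$ is Fredholm of index $-N$ (times the multiplicity). If strongly commuting $P_j'$ existed, the corresponding $A'=\tfrac{1}{\sqrt2}\sum_j c_j\otimes(P_j'+iP_{j+n}')$ would be \emph{normal}, and $V'=A'|A|^{-1}$ would differ from $V$ by the compact operator $(A'-A)|A|^{-1}$, hence still have index $-N\neq0$. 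But a normal Fredholm operator has index $0$. This is short and requires no quantitative spectral analysis.

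Your Steps 1--3 are fine: the harmonic-oscillator comparison does give $\dim E_RH<\infty$, and the block identity $E_i[A_j,A_k]E_i=-i\theta_{jk}E_i$ is correct, yielding $\mathrm{tr}(A_j[A_k,E_R])=-i\theta_{jk}\,\mathrm{rank}(E_R)$. The gap is in Step~5. The trivial bound $\|[A_k,E_R]\|_1\le 2M\,\mathrm{rank}(E_R)$ only gives $|\theta_{jk}|\le 2M^2$, so you need the claimed $o(\mathrm{rank}(E_R))$ improvement. Two things are missing. First, the Fourier localization you sketch does not produce decay of $\|E_lA_kE_i\|$ in $|s_{l,k}-s_{i,k}|$: from $|\langle e^{itP_k}\xi_i,\xi_l\rangle|\le M|t|$ you cannot extract a bound on the block of $A_k$ itself without already knowing something about the spectral measure of $P_k$ relative to the $H_i$ decomposition; integrating against a Schwartz $f$ gives information about $\hat f(-P_k)$, not about $A_k=P_k-P_k'$. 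Second, even granting perfect localization (say $E_lA_kE_i=0$ unless $|s_{l,k}-s_{i,k}|\le C$), a surface-to-volume estimate for $\|[A_k,E_R]\|_1/\mathrm{rank}(E_R)$ requires control on how the atoms $\{\bs_i\}$ and their multiplicities $m_i$ are distributed in $\R^d$; nothing in the hypotheses prevents the atoms from concentrating near the boundary of $[-R,R]^d$ or having wildly varying multiplicities, so the ``thin shell'' picture is unjustified. Incidentally, the paper states that its proof \emph{is} the natural generalization of Proposition~3.7 in \cite{HR}, so that reference points to the index argument rather than the trace argument you outline.
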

\begin{proof}Let us first assume that $(P_1,\cdots,P_d)$ is irreducible. It is sufficient to consider the standard representation on $L_2(\R^n)$,
\[P_{j}= -i\pd{x_j}\pl , \pl P_{j+n}= x_j \pl , \pl j=1, \cdots, n\pl.\]
Other $\theta$'s follow by a linear transformation $T$ as in Proposition \ref{nonsingular}. Consider the creation and annihilation operators of the $n$-dimensional harmonic oscillator,
\[a_j=\frac{1}{\sqrt2}(P_{j}-iP_{j+n})\pl , \pl a_j^*=\frac{1}{\sqrt2}(P_{j}+iP_{j+n}) \pl.\]
We use the usual notations $m=(m_1,m_2,\cdots, m_n) \in \mathbb{N}^n , {m!=m_1!\cdots m_n!}$ and ${|m|=\sum_j m_j}$.
Denote $\phi_0(x)=\pi^{-\frac{n}{4}}e^{-\frac{|x|^2}{2}}$ as the Gaussian function for $\R^n$. There is a natural orthonormal basis of harmonic oscillator \[\phi_m=(a^*)^m \phi_0\pl , \pl m\in \mathbb{N}^n\]
where $(a^*)^m=(a_1^*)^{m_1}\cdots (a_n^*)^{m_n}$. The creation and annihilation actions are
\begin{align*}&a_j^*\phi_m=\sqrt{m_j+1}\phi_{(m_1,\cdots,m_j+1,\cdots, m_n)}\pl ,\pl a_j\phi_m=\sqrt{m_j}\phi_{(m_1,\cdots,m_j-1,\cdots, m_n)}, \\ &a_j\phi_{(m_1,\cdots,0,\cdots, m_n)}=0 \pl.
\end{align*}
Let $c_1 ,\cdots, c_n$ be the $N\times N$ matrices which are self-adjoint generators of the complex Clifford algebra $\C l^n$ ($N=2^{\frac{n}{2}}$ or $N=2^{\frac{n+1}{2}}$). They satisfy the commutation relations
\[c_jc_k+c_kc_j=2\pl , \pl \text{if} \pl j=k \pl ;\pl  c_jc_k+c_kc_j=0\pl , \text{otherwise}.\]
Set $A=\sum_j c_j\ten a_j^*$. One calculates that
\[A^*A=1\ten \sum_j a_ja_j^*\pl , \pl AA^*=1\ten \sum_j a_j^*a_j \pl.\]
Note that \[(\sum_j a_ja_j^*) \phi_m=(|m|+n) \phi_m \pl , \pl (\sum_j a_j^*a_j)\phi_m=|m|\phi_m,\]
Thus $|A|=(A^*A)^{\frac{1}{2}}$ is invertible with compact inverse $|A|^{-1}$ and $\ker(|A^*|)=\{\C \phi_0\}\ten \C^{N}$. The polar $V=A|A|^{-1}$  of $A$ is a partial isometry, $\ker(V)=\ker(|A|)$, and $\ker{V^*}=\ker(|A^*|)$. Hence $V$ is a Fredholm operator with $index(V)=-N$.

Assume that $P_1',\cdots, P_d'$ on $H$ commute strongly and $P_j'-P_j$ is bounded for all $j$. Then $A'=\frac{1}{\sqrt{2}}\sum_{j=1}^n c_j\ten (P_{j}'+iP_{2j+n}')$ is normal, and $A'-A$ is bounded on $H$. Let ${V'=A'|A|^{-1}}$. $V'$ is everywhere defined and bounded since $V'-V=(A'-A)|A|^{-1}$ is compact.. Hence $V'$ is also a Fredholm operator with ${index(V')=index(V)=-N}$. Nevertheless, since $|A|^{-1}$ is one-to-one and onto the domain $D(A)(=D(A'))$,
\[dim(\ker (V'))=dim (\ker (A'))=dim (\ker (A')^*)=dim(\ker (V')^*).\]
which leads to a contradiction.

When $(P_1,\cdots,P_d)$ has finite multiplicity $M$, $V$ is of Fredholm index $-MN$. The proof remains the same as above.
\end{proof}

We now prove our main Theorem \ref{ultimate}. Recall that we denote by $\A(d)$ the space of all real $d$-dimensional skew-symmetric matrix.
\begin{proof}[Proof of Theorem \ref{ultimate}]The proof is by induction. The continuous maps $u,v$ from Theorem \ref{d2} give the initial step $d=2$. For the induction step, we may write ${H=H_1\ten H_2^{\ten (d-1)}}$, where both $H_1$ and $H_2$ are infinite dimensional. We assume $d-1$ maps $U_1,U_2, \cdots, U_{d-1}$ on $H_1$ satisfying the desired poperty for dimension $d-1$. Also we have the two maps $u,v$ for $d=2$ on $H_2$. Denote $\hat{\theta}$ for $(d-1)\times (d-1)$ principal submatrix $(\theta_{jk})_{j,k=1}^{d-1}$, $I_1$ the identity on $H_1$ and $I_2$ the identity on $H_2$. We constructed $d$ maps from $\A(d)$ to $U(H_1\ten H_2^{\ten (d-1)})$ as follows,
\begin{align*}
u_1(\theta,t)=&U_1(\hat{\theta},t)\ten u(\theta_{1d},t)\ten I_2 \ten\cdots \ten I_2 \pl, \\
u_2(\theta,t)=&U_2(\hat{\theta},t)\ten I_2\ten u(\theta_{2d},t)\ten I_2\ten\cdots \ten I_2 \pl,\\
u_3(\theta,t)=&U_3(\hat{\theta},t)\ten I_2 \ten I _2\ten u(\theta_{3d},t) \ten I_2 \ten \cdots \ten I_2 \pl,\\
&\vdots\\
u_{(d-1)}(\theta,t&)=U_{(d-1)}(\hat{\theta},t)\ten I_2 \ten \cdots \ten I _2 \ten u(\theta_{(d-1),d},t) \pl,\\
u_d(\theta,t)=&I_{1}\ten v(\theta_{1d},t)\ten v(\theta_{2d},t) \ten v(\theta_{3d},t) \ten \cdots \ten v(\theta_{(d-1),d},t) \pl.
\end{align*}
One can check that  $(u_1,u_2,\cdots,u_d)$ satisfies the desired commutation relations. By the triangle inequality, for $j\le d-1$,
\begin{align}\norm{u_j(\theta, t)-&u_j(\theta', t)}{}\le \norm{U_j(\hat{\theta}, t)-U_j(\hat{\theta'}, t)}{}+\norm{u(\theta_{jd}, t)-u(\theta'_{jd}, t)}{}  \label{norm} \\&\le C|t|(\sum_{1\le k\le d-1}|\theta_{jk}-\theta'_{jk}|^{\frac{1}{2}})+C|t| \pl |\theta_{jd}-\theta'_{jd}|^{\frac{1}{2}}= C|t|(\sum_{1\le k\le d}|\theta_{jk}-\theta'_{jk}|^{\frac{1}{2}})\pl, \nonumber\end{align}
and the estimate of $u_d$ follows similarly. Here we used the inductive assumption on $d-1$ and the initial step on $d=2$.
The constant $C$ is independent of dimension $d$ and it can be $800,000$ as in Theorem \ref{continuousd2}.
\qd

Denote $|\bs|=(\sum_j |s_j|^2)^\frac{1}{2}$ as the Euclidean metric for vector $\bs=(s_1,s_2,\cdots,s_d)$ and recall the symplectic bilinear form $\theta(\bs,\bt)=\sum_{jk}\theta_{jk}s_jt_k$. A strongly continuous map $u:\R^d \to U(H)$ is called a $\theta$-projective unitary representation (or shortly $\theta$-representation) of $\R^d$ if it satisfies \begin{align}\label{po}u(\bs)u(\bt)=e^{\frac{i}{2}\theta(\bs,\bt)}u(\bs+\bt)\pl.\end{align}
The above theorem can be reformulated as a continuous family of projective unitary representations.

\begin{cor}
\label{projrep} Let $H$ be an infinite dimensional Hilbert space.
There exist a map $u: \A(d)\times \R^d  \to U(H)$ and a universal constant $C>0$ such that
\begin{enumerate}
\item[i)] for each $\theta\in\A(d)$, $u(\theta, \cdot)$ is a strongly continuous $\theta$-representation of $\R^d$;
\item[ii)]for any $\bs\in \R^d$ and $\theta, \theta'\in \A(d)$,
 \begin{align}\label{esti}\norm{u(\theta,\bs)-u(\theta',\bs)}{}\le C|\bs|(\sum_{k,j} |\theta_{kj}-\theta_{kj}'|^{\frac12})\pl, \end{align}
\end{enumerate}
\end{cor}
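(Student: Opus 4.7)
Since the corollary is billed as a reformulation of Theorem \ref{ultimate}, the plan is to set
\[u(\theta,\bs)\;:=\;\exp\Bigl(-\tfrac{i}{2}\sum_{j<k}\theta_{jk}s_js_k\Bigr)\,u_1(\theta,s_1)\,u_2(\theta,s_2)\cdots u_d(\theta,s_d),\]
with $u_1,\ldots,u_d$ the maps produced by Theorem \ref{ultimate}. For part (i), strong continuity in $\bs$ is inherited from the strong continuity of each one-parameter group $u_j(\theta,\cdot)$ together with continuity of the scalar phase. The $\theta$-projective representation identity \eqref{po} is a direct bookkeeping computation from the commutation relations $u_j(\theta,s)u_k(\theta,t)=e^{ist\theta_{jk}}u_k(\theta,t)u_j(\theta,s)$ of Theorem \ref{ultimate}(i), following exactly the same steps as the single-$\theta$ computation sketched around \eqref{proj}.

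For part (ii), the naive triangle inequality gives
\[\|u(\theta,\bs)-u(\theta',\bs)\|\le \tfrac{1}{2}\Bigl|\sum_{j<k}(\theta_{jk}-\theta'_{jk})s_js_k\Bigr|+\sum_{j=1}^d\|u_j(\theta,s_j)-u_j(\theta',s_j)\|.\]
Theorem \ref{ultimate}(ii) cleanly controls the second sum, but the first is \emph{quadratic} in $|\bs|$ and only first-order in $\theta-\theta'$; this mismatch is the main obstacle. The fix is to pass to the one-parameter subgroup $t\mapsto u(\theta,t\bs)$: by skew-symmetry $\theta(\bs,\bs)=0$, so the cocycle from (i) is trivial along this direction and $t\mapsto u(\theta,t\bs)$ is a genuine strongly continuous one-parameter unitary group. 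Let $P_\bs(\theta)$ denote its Stone generator.

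Applying the same triangle inequality with $t\bs$ in place of $\bs$ yields
\[\|u(\theta,t\bs)-u(\theta',t\bs)\|\le L|t|+\tfrac{M}{2}t^2\quad\text{for all }t\in\R,\]
where, using Theorem \ref{ultimate}(ii) and $|e^{i\alpha}-e^{i\beta}|\le|\alpha-\beta|$,
\[L=C\sum_{j,k}|s_j|\,|\theta_{jk}-\theta'_{jk}|^{\frac12}\le C|\bs|\sum_{j,k}|\theta_{jk}-\theta'_{jk}|^{\frac12},\qquad M=\sum_{j<k}|s_js_k|\,|\theta_{jk}-\theta'_{jk}|.\]
For every $\eps>0$ this yields $\|e^{itP_\bs(\theta)}-e^{itP_\bs(\theta')}\|\le(L+\tfrac{M\eps}{2})|t|$ on $[0,\eps]$, so the sufficiency direction of Proposition \ref{eq} (which, as emphasized in its proof, needs the estimate only on a small interval around $0$) gives $\|P_\bs(\theta)-P_\bs(\theta')\|\le L+\tfrac{M\eps}{2}$. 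Sending $\eps\downarrow 0$ kills the offending quadratic $M$-term and leaves $\|P_\bs(\theta)-P_\bs(\theta')\|\le L$. The necessity direction of Proposition \ref{eq} applied at $t=1$ then yields
\[\|u(\theta,\bs)-u(\theta',\bs)\|=\|e^{iP_\bs(\theta)}-e^{iP_\bs(\theta')}\|\le L\le C|\bs|\sum_{j,k}|\theta_{jk}-\theta'_{jk}|^{\frac12},\]
which is exactly \eqref{esti}.
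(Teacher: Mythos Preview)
Your proof is correct and actually cleaner in one respect than the paper's. Both arguments start from the same formula
\[u(\theta,\bs)=\exp\Bigl(-\tfrac{i}{2}\sum_{j<k}\theta_{jk}s_js_k\Bigr)u_1(\theta,s_1)\cdots u_d(\theta,s_d),\]
and both end by applying the necessity direction of Proposition~\ref{eq} to $e^{iP_\bs(\theta)}=u(\theta,\bs)$. The difference lies in how the generator $P_\bs(\theta)$ is identified and bounded. The paper first restricts to nonsingular $\theta$, invokes Proposition~\ref{ns} to obtain a common core for $P_1(\theta),\ldots,P_d(\theta)$, and then uses the Baker--Campbell--Hausdorff formula to show $u(\theta,\bs)=\exp\bigl(i\sum_j s_jP_j(\theta)\bigr)$; the estimate on $\|\sum_j s_j(P_j(\theta)-P_j(\theta'))\|$ then follows from the individual generator bounds and Cauchy--Schwarz, and singular $\theta$ are handled at the end by a density argument. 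Your route bypasses BCH entirely: you only need that $t\mapsto u(\theta,t\bs)$ is a genuine one-parameter group (immediate from $\theta(\bs,\bs)=0$), and you exploit the remark at the end of the proof of Proposition~\ref{eq} that the sufficiency direction needs the bound only on a short interval, so the quadratic phase term $\tfrac{M}{2}t^2$ disappears in the limit $\eps\downarrow 0$. This works uniformly for all $\theta,\theta'\in\A(d)$ without singling out the nonsingular case, which is a genuine simplification; the price is that you never explicitly identify $P_\bs(\theta)$ with $\sum_j s_jP_j(\theta)$, but that identification is not needed for the corollary as stated.
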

\begin{proof}Let $u_1(\theta,\cdot),u_2(\theta,\cdot),\cdots, u_d(\theta,\cdot)$ be one-parameter unitary groups from Theorem \ref{ultimate} and $P_1(\theta),P_2(\theta),\cdots,P_d(\theta)$ be the corresponding infinitesimal generators. Then
\[[P_j(\theta), P_k(\theta)]=-i\theta_{jk}I\pl , \pl j,k=1,\cdots, d,\]
and by Lemma \ref{eq}, $P_j(\theta)-P_j(\theta')$ is bounded on $H$,
\[\norm{P_j(\theta)-P_j(\theta')}{}\le C(\sum_{ k}|\theta_{jk}-\theta'_{jk}|^{\frac{1}{2}}) \pl.\]
Define
 \[u(\theta,{\bf s})=\exp(-\frac{i}{2}\sum_{j<k}\theta_{jk}s_js_k)u_1(\theta,s_1)u_2(\theta,s_2)\cdots u_d(\theta,s_d)\]
We first consider that $\theta$ and $\theta'$ are nonsingular. It is clear from Proposition \ref{ns} that $P_1(\theta), P_2(\theta), \cdots,P_d(\theta)$ share a common core. For any vector $\bs \in \R^d$, it follows from Baker-Campbell-Hausdorff formula that
 \[u(\theta,{\bf s})=\exp(-\frac{i}{2}\sum_{j<k}\theta_{jk}s_js_k)u_1(\theta,s_1)u_2(\theta,s_2)\cdots u_d(\theta,s_d)=e^{i(\sum_j s_jP_j(\theta))} \pl.\]
Therefore,
\begin{align*}\norm{u(\theta, {\bf s})-u({\theta'}, {\bf s})}{}&=\norm{e^{i(\sum_j s_j P_j(\theta))}-e^{i(\sum_j s_j P_j(\theta'))}}{}\le\norm{\sum_j s_j (P_j(\theta)- P_j(\theta'))}{}\\&\le C\sum_j |s_j|(\sum_{k}  |\theta_{jk}-\theta_{jk}'|^{\frac12})\le C|{\bf s}|(\sum_j (\sum_k |\theta_{jk}-\theta_{jk}'|^{\frac12})^2)^\frac{1}{2}\\&\le C|{\bf s}|(\sum_{j,k} |\theta_{jk}-\theta_{jk}'|^{\frac12})\pl.
\end{align*}
Note that when $\bs$ is fixed, $u(\cdot,\bs)$ is continuous in norm. Then the estimates for general $\theta$ and $\theta'$ follows.
\end{proof}

We now explore applications on noncommutative Euclidean spaces. Let $\S(\R^d)$ be the complex Schwartz functions on $\R^d$, and $\bs\cdot \bt$ denote the Euclidean inner product. Fix a nonzero $\theta\in \A(d)$.  The associated Moyal product, for $f,g\in \S(\R^d)$, is defined as
\begin{align} f\star_\theta g (\bx)=(2\pi)^{-d}\int_{\R^{d}} \int_{\R^{d}} f(\bx-\frac{1}{2}\theta\bs)g(\bx+\bt)e^{-i\bs\cdot \bt}d\bs d\bt \pl, \pl f,g\in \S(\R^d) \pl
\label{moyalformula}\end{align}
(see \cite{Rieffel93}). The noncommutative Euclidean space $E_\theta$ associated to $\theta$ is the $C^*$-algebra generated by $\{\la_\theta(f)\pl|\pl f\in \S(\R^d)\}$, where $\la_\theta: (S(\R^d),\star_\theta)\to B(L_2(\R^d))$ is the left Moyal multiplication,
\[\la_\theta(f)g=f\star_\theta g \pl, \pl f\in \S(\R^d)\pl ,\pl g\in L_2(\R^d) \pl.\]
$E_\theta$ is a noncommutative deformation of $C_0(\R^d)$, the algebra of continuous functions vanishing at infinity. An equivalent formulation of Moyal product via Fourier transform is given by
\begin{align*}
\la_\theta(f)= \int_{\R^{d}} \hat{f}(\bs) \la_\theta(\bs) d\bs \pl,
\end{align*}
where $\hat{f}$ is the Fourier transform of $f$ and
\begin{align}\label{regulartheta}\widehat{\la_\theta(\bs)g}(\bt) =e^{i\theta(\bs,\bt-\bs)}\hat{g}(\bt-\bs) \pl,\end{align}
is called the left regular $\theta$-representation of $\R^d$ (see \cite{Gracia88} for more information on Moyal analysis).

Let $u$ be a $\theta$-representation on a Hilbert space $H$. The associated quantization map (still denoted by $u$)
\[ \pl u(f)=\int_{\R^{d}} \hat{f}(\bs) u(\bs) d\bs \pl,\]
gives a representation of Moyal product, i.e. $u(f)u(g)=u(f\star_\theta g)$.
Actually, representations of $E_\theta$ are in one-to-one correspondence to $\theta$-representations of $\R^d$ (see \cite{Mackey} and Appendix).
There is a (canonical) $*$-homomorphism from $E_\theta$ onto the $C^*$-algebra generated by $\{u(f)\pl|\pl f\in \S(\R^d)\}$,
\[\pi_u(\la_\theta(f))= u(f)\pl.\]
If $\pi_u$ is an isomorphism, we say $u$ is \emph{canonical}.

\begin{lemma}\label{reduced} Let $\theta\in \A(d)$ be nonsingular and $\hat{\theta}=(\theta_{jk})_{j,k=1}^{d-1}$ be its principal submatrix. For a vector $\bs\in \R^{d-1}$, write $(\bs,0)=(s_1,\cdots,s_{(d-1)},0)\in \R^d$. Let $u :\R^d \to U(H)$ be a $\theta$-representation on $H$.
Then the following $\hat{\theta}$-representation of $\R^{d-1}$
 \[\hat{u}: \R^{d-1} \to U(H)\pl ,\pl \hat{u}(\bs)=u(\bs,0) \pl, \]
is canonical.
\end{lemma}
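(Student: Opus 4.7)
My plan is to show that the canonical surjection $\pi_{\hat u} : E_{\hat\theta} \to C^*(\hat u)$ is injective. First I would reduce to the case that $u$ is irreducible: by Proposition~\ref{ns}, $u \cong v \otimes 1_K$ for the unique irreducible $\theta$-representation $v$ and a multiplicity Hilbert space $K$, whence $\hat u = \hat v \otimes 1_K$ and $\pi_{\hat u} = \pi_{\hat v} \otimes \mathrm{id}_K$ is injective iff $\pi_{\hat v}$ is.

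Second, I would analyze $N := \ker \hat\theta$. Since $\theta$ is nondegenerate, $d=2n$ is even, so $\hat\theta$ is skew-symmetric of odd dimension $d-1$ and has nontrivial kernel. A direct check shows that for $\bs\in N$,
$\theta(\bs,0) = (0,\ldots,0,-\langle \theta_{\cdot d},\bs\rangle)\in\R^d$, where $\theta_{\cdot d} := (\theta_{1d},\ldots,\theta_{d-1,d})$; nondegeneracy of $\theta$ then forces $\theta_{\cdot d}|_N$ to be an injective linear functional, and hence $\dim N = 1$. Fix a generator $\bs_0$ of $N$ and set $\alpha := \langle \theta_{\cdot d},\bs_0\rangle \ne 0$.

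The central step is to show that the one-parameter subgroup $\{\hat u(t\bs_0)\}_{t\in\R}$, which is central in $C^*(\hat u)$ because $\bs_0 \in N$, has infinitesimal generator with spectrum equal to $\R$. This follows from the conjugation identity
\[u(0,r)\,\hat u(t\bs_0)\,u(0,r)^* = e^{-irt\alpha}\, \hat u(t\bs_0),\qquad r,t\in\R,\]
where $u(0,r):=u(0,\ldots,0,r)$ denotes the $d$-th one-parameter subgroup extending $\hat u$, and the identity is immediate from the projective relation~\eqref{proj}. Since $\alpha\ne 0$, the spectrum of $\hat u(\bs_0)$ is closed, nonempty, and invariant under multiplication by $e^{-ir\alpha}$ for every $r\in\R$, so it must equal $\T$; equivalently, the generator's spectrum is $\R$.

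To conclude, I would use that, because $\ker\hat\theta$ is one-dimensional, $E_{\hat\theta}$ is a continuous $C^*$-bundle over $\widehat N\cong\R$ with simple fibers isomorphic to $\K(L_2(\R^{n-1}))$, and its center $C_0(\widehat N)$ is generated by the regular one-parameter group $\{\hat\la(t\bs_0)\}_{t\in\R}$. The previous step shows $\pi_{\hat u}$ restricts to an isomorphism on this center, and since the fibers are simple, a representation of $C_0(\widehat N)\otimes\K$ is faithful iff it is faithful on the center. I expect the main technical obstacle to be this last invocation of the bundle structure of $E_{\hat\theta}$; an alternative route would be to decompose both $\hat u$ and $\hat\la$ as direct integrals of the irreducible $\hat\theta$-representations indexed by central characters (following Mackey theory) and compare the resulting spectral measures, which again reduces the problem to the full-spectrum calculation above.
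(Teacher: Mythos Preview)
Your approach is correct and takes a genuinely different route from the paper's. The paper argues directly at the level of $L_2$-norms: after reducing via Proposition~\ref{ns} to the left regular $\theta$-representation on $L_2(\R^d)$, it chooses test vectors $g_n\in L_2(\R^d)$ whose Fourier transforms are concentrated near $t_d=0$ (namely $\hat g_n(\bt,t_d)=\hat g(\bt)\phi_n(t_d)$ with $\operatorname{supp}\phi_n\subset[-\epsilon_n,\epsilon_n]$ and $\|\phi_n\|_2=1$), and shows by an explicit computation that $\|\hat u(f)g_n\|_2\to\|\la_{\hat\theta}(f)g\|_2$ whenever $\hat f$ has compact support. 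Together with the automatic inequality $\|\hat u(f)\|\le\|\la_{\hat\theta}(f)\|$ this gives the norm equality on a dense set and hence injectivity of $\pi_{\hat u}$.

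Your argument is structural rather than computational: you isolate the one-dimensional radical $N=\ker\hat\theta$, and use conjugation by the extra coordinate $u(0,r)$ to translate the spectrum of the central generator, forcing it to be all of $\R$. This makes the role of nonsingularity of $\theta$ transparent (it is precisely what gives $\alpha\ne 0$), and the method would adapt to other continuous fields with simple fibers. The paper's proof, by contrast, is more elementary---no bundle or Mackey machinery is invoked---but is tied to the explicit Fourier form of $\la_\theta$. Your flagged technical step is indeed standard once one knows $E_{\hat\theta}\cong C_0(\R)\otimes\K(L_2(\R^{n-1}))$: every closed ideal of this algebra has the form $C_0(U)\otimes\K$ for some open $U\subset\R$, and $\sigma(P)=\R$ forces $U=\emptyset$.
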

\begin{proof} For $f\in \S(\R^{d-1})$, denote $v(f)=\int \hat{f}(\bs)\la_\theta(\bs,0)d\bs$, and $\la_{\hat{\theta}}(f)$ for the left $\hat{\theta}$-Moyal multiplication on $L_2(\R^{d-1})$. It is sufficient to show that
\[\norm{v(f)}{}\pl = \pl\norm{\la_{\hat{\theta}}(f)}{} \]
holds for functions $f$ which are $L_1$-norm dense in $\S(\R^{d-1})$.
By Proposition \ref{nonsingular}, we may just consider that $u$ is the left regular $\theta$-representation \eqref{regulartheta} on $L_2(\R^d)$. For any $g\in S(\R^{d-1})$, define $g_n\in S(\R^{d})$ as follows,
\[\hat{g}(\bt,t_d)=\hat{g}(\bt)\phi_n(t_d) \pl ,\pl (\bt,t_d)\in \R^d \pl,\]
where $\phi_n\in S(\R)$ is a sequence of smooth function supported in $[-\epsilon_n, \epsilon_n]$  such that $\epsilon_n \to 0$ and the $L_2$-norm $\norm{\phi_n}{2}=1$.
For $f\in \S(\R^{d-1})$, \begin{align*}&\widehat{v(f)g_n}(\bt,t_d)-\widehat{\la_{\hat{\theta}}(f)g}(\bt) \phi_n(t_d)\\=&\phi_n(t_d)\int \hat{f}(\bs)\hat{g}(\bt-\bs)e^{\frac{i}{2}\hat{\theta}(\bs,\bt-\bs)} \exp(\frac{i}{2}\sum_{j=1}^{d-1}\theta_{jd} s_jt_d)d\bs-\phi_n(t_d)\int \hat{f}(\bs)\hat{g}(\bt-\bs)e^{\frac{i}{2}\hat{\theta}(\bs,\bt-\bs)} d\bs
\\ =& \phi_n(t_d)\int \hat{f}(\bs)\hat{g}(\bt-\bs)e^{\frac{i}{2}\hat{\theta}(\bs,\bt-\bs)} (\exp(\frac{i}{2}\sum_{j=1}^{d-1}\theta_{jd} s_jt_d)-1)d\bs\pl.
\end{align*}
Now assume that $\hat{f}$ is compactly supported. Then the sequence
\[ \beta_n:=\sup_{t_d\in supp (\phi_n)}\sup_{\bs \in supp (f)}|\exp(\frac{i}{2}\sum_{j=1}^{d-1}\theta_{jd} s_jt_d)-1|\]
converges to $0$. Hence
\begin{align*}
\norm{\widehat{v(f)g_n}-\widehat{\la_{\hat{\theta}}(f)g}\phi_n}{2}\le
\beta_n\norm{\phi_n}{2}\norm{f}{2}\norm{g}{2} \to 0
\end{align*}
Thus for compactly supported $f$, and any $g\in \S(\R^{d-1})$
\[\lim_n\norm{v(f)g_n}{2}\pl= \pl \norm{\la_{\hat{\theta}}(f)g}{2} \pl,\]
which implies
\[\norm{v(f)}{}\pl = \pl\norm{\la_{\hat{\theta}}(f)}{} \pl.\]
because $\norm{g}{2}=\norm{g_n}{2}=1$.
\end{proof}
Let $\al>0$ and $\Delta$ be the Laplacian in $\R^d$. Recall that the Sobolev space $W^{\al, 2}(\R^d)$ is the Hilbert space ${H^\al(\R^d)=\{f\in L_2(\R^d) \pl| \pl (1+|\Delta|)^{\frac{\al}{2}}f \in L_2(\R^d)\}}$ equipped with the norm $\norm{f}{H^\al}=\norm{(1+|\Delta|)^{\frac{\al}{2}}f}{2}$.
\begin{cor} \label{quan}Let $H$ be an infinite dimensional Hilbert space. There exists a map ${u:\A(d)\times \mathcal{S}(\R^d) \to B(H)}$, $(\theta,f) \to u_\theta(f)$ such that
\begin{enumerate}
\item[i)] for each $\theta \in \A(d)$,
\[u_\theta(f\star_\theta g)=u_\theta(f)u_\theta(g)\pl , \pl \forall \pl f,g\in \mathcal{S}(\R^d) \pl;\]
\item[ii)] for $\al>\frac{d}{2}+1$, there exists a constant $C_{\al,d}$ such that for all $f\in \mathcal{S}(\R^d)$ and ${\theta, \theta'\in \A(d)}$,
\[\norm{u_\theta(f)-u_{\theta'}(f)}{}\le C_{\al,d}(\sum_{j,k} |\theta_{jk}-\theta'_{jk}|^{\frac{1}{2}})\norm{f}{H^\al}\pl;\]
\item[iii)] for all $f$ and $\theta$,
$\norm{u_\theta(f)}{}= \norm{\la_\theta(f)}{}\pl.$
\end{enumerate}
\end{cor}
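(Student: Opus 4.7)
Define the quantization map by
\[u_\theta(f)\pl :=\pl \intt_{\R^d} \hat f(\bs)\pl u(\theta,\bs)\pl d\bs\pl,\]
where $u:\A(d)\times \R^d \to U(H)$ is the continuous family of $\theta$-representations supplied by Corollary \ref{projrep}; the Bochner integral converges because $\hat f\in L_1(\R^d)$ for $f\in\S(\R^d)$ and $\norm{u(\theta,\bs)}{}=1$. Then (i) is an immediate Weyl-type calculation: the projective relation $u(\theta,\bs)u(\theta,\bt)=e^{\frac{i}{2}\theta(\bs,\bt)}u(\theta,\bs+\bt)$ combined with Fubini and the Fourier description \eqref{moyalformula} of $\star_\theta$ yields $u_\theta(f)u_\theta(g)=u_\theta(f\star_\theta g)$.

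For (ii), pulling the operator norm inside the Bochner integral and invoking Corollary \ref{projrep}(ii) gives
\[\norm{u_\theta(f)-u_{\theta'}(f)}{}\pl\le\pl C\kla\summ_{j,k}|\theta_{jk}-\theta'_{jk}|^{1/2}\mer\intt_{\R^d}|\hat f(\bs)|\pl|\bs|\pl d\bs\pl.\]
A Cauchy--Schwarz against the weight $(1+|\bs|^2)^{\al/2}$ bounds the remaining scalar integral by $\norm{f}{H^\al}\cdot\kla\intt_{\R^d}|\bs|^2(1+|\bs|^2)^{-\al}d\bs\mer^{1/2}$, and the second factor is finite precisely when $2\al-2>d$, that is, $\al>d/2+1$; this produces the constant $C_{\al,d}$.

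The main obstacle is (iii). For nonsingular $\theta$ the Stone--von Neumann classification (Proposition \ref{ns}) identifies $\theta$-representations up to multiplicity, and the inductive construction of Theorem \ref{ultimate} places each generator of $u(\theta,\cdot)$ inside an infinite-dimensional tensor factor. Thus $u(\theta,\cdot)$ has infinite multiplicity, is unitarily equivalent to an infinite amplification of the left regular $\theta$-representation, and the induced $*$-homomorphism $\pi_u$ is isometric, yielding $\norm{u_\theta(f)}{}=\norm{\la_\theta(f)}{}$. For singular $\theta$ I would conclude either by approximation---the nonsingular matrices are dense in $\A(d)$, the map $\theta\mapsto u_\theta(f)$ is norm-continuous by (ii), and the same Bochner bound applied to the explicit left regular family \eqref{regulartheta} shows $\theta\mapsto \la_\theta(f)$ is also norm-continuous, so passing to the limit preserves the identity---or, more structurally, by changing coordinates to write $\theta$ as a block $\hat\theta\oplus 0$ with $\hat\theta$ nonsingular and invoking Lemma \ref{reduced} to reduce the canonicity of $u(\theta,\cdot)$ to the already-handled nonsingular case.

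The delicate bookkeeping I anticipate lies in verifying infinite multiplicity of $u(\theta,\cdot)$ at every nonsingular $\theta$ directly from the tensor construction (so that Proposition \ref{converse} is not an obstruction), and, in the singular case, checking that the approximation converges in operator norm rather than merely in the strong operator topology, so that equality of norms genuinely survives the limit.
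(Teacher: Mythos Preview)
Your arguments for (i) and (ii), and for (iii) at nonsingular $\theta$, match the paper's. The gap is in (iii) at singular $\theta$, and it is more than bookkeeping.

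First, your claim that ``the same Bochner bound applied to the explicit left regular family \eqref{regulartheta} shows $\theta\mapsto\la_\theta(f)$ is also norm-continuous'' is false. From \eqref{regulartheta} one computes
\[\norm{\la_\theta(\bs)-\la_{\theta'}(\bs)}{}=\sup_{\bt\in\R^d}\big|e^{i\theta(\bs,\bt-\bs)}-e^{i\theta'(\bs,\bt-\bs)}\big|=2\]
whenever $(\theta-\theta')\bs\neq 0$, so the integrand does not become small and the Bochner estimate gives nothing. The entire point of constructing the family $u(\theta,\cdot)$ is precisely that the natural family $\la_\theta(\cdot)$ is \emph{not} norm-continuous in $\theta$. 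The paper instead uses only that $\la_{\theta_n}(f)\to\la_\theta(f)$ in the strong operator topology, which yields $\lim_n\norm{\la_{\theta_n}(f)}{}\ge\norm{\la_\theta(f)}{}$; together with $\norm{u_\theta(f)}{}=\lim_n\norm{\la_{\theta_n}(f)}{}$ from (ii) this already gives $\norm{u_\theta(f)}{}\ge\norm{\la_\theta(f)}{}$, and the reverse inequality (equivalently, that the displayed inequality is an equality) is the continuous-field statement cited from \cite{Rieffel93}.

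Second, you overlook that when $d$ is odd there are \emph{no} nonsingular skew-symmetric $d\times d$ matrices, so your density argument cannot start. The paper treats odd $d$ separately: it embeds $\theta\in\A(d-1)$ as $\tilde\theta=\left[\begin{smallmatrix}\theta&0\\0&0\end{smallmatrix}\right]\in\A(d)$, sets $u_\theta(f)=\int\hat f(\bs)\,u(\tilde\theta,(\bs,0))\,d\bs$, and approximates $\tilde\theta$ by nonsingular even-dimensional $\tilde\theta_n$. Lemma~\ref{reduced} is precisely the statement that for nonsingular $\tilde\theta_n$ the restriction to the first $d-1$ coordinates is canonical, so that $\norm{\int\hat f(\bs)u(\tilde\theta_n,(\bs,0))\,d\bs}{}=\norm{\la_{\theta_n}(f)}{}$, after which one passes to the limit as in the even case. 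Your proposed use of Lemma~\ref{reduced} (``change coordinates to write $\theta=\hat\theta\oplus 0$ with $\hat\theta$ nonsingular'') goes the wrong way: that lemma requires the \emph{ambient} matrix to be nonsingular, not the block, and says nothing about canonicity of $u(\theta,\cdot)$ for a singular $\theta$.
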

\begin{proof}We first consider the case $d=2m$ is even. Let $u(\theta,\bs)$ be the continuous family of projective unitary representations from Corollary \ref{projrep}. Define
\[u_\theta(f)=\int_{\R^d}\hat{f}({\bf s})u(\theta,{\bf s})d{\bf s}\pl.\]
The first assertion follows from that $u(\theta, \cdot)$ is a $\theta$-representation of $\R^d$. For ii), we use the estimate \eqref{esti},
\begin{align*}\norm{u_\theta(f)-u_{\theta'}(f)}{}&\le C (\sum_{j,k} |\theta_{jk}-\theta_{jk}'|^{\frac12})\int_{\R^d} |\hat{f}({\bf s})||{\bf s}|d{\bf s}\\
&\le C (\sum_{j,k} |\theta_{jk}-\theta_{jk}'|^{\frac12})\norm{|\hat{f}||{\bf s}| (1+4\pi|{\bf s}|^2)^{\frac{\al}{2}-\frac{1}{2}}}{2}(\int_{\R^d} (1+4\pi |{\bf s}|^2)^{-\al+1} d {\bf s})^{\frac12} \\
&\le C_{\al, d} (\sum_{j,k} |\theta_{jk}-\theta_{jk}'|^{\frac12})\norm{f}{H^\al}\pl,
\end{align*}
The second integral converges when $\al-1>\frac{d}{2}$ and the constant \[C_{\al, d}= (\frac{V_d}{2\al-d-2})^{\frac12}C \]
where $V_d$ is the volume of unit $(d-1)$-sphere. For iii), given a $\theta$, it is sufficient to show that for any $f\in \mathcal{S}(R^d)$, $\norm{u_\theta(f)}{}=\norm{\la_\theta(f)}{}$.
This is clear for all nonsingular $\theta$. Given a singular $\theta$ in even dimensions, we choose a sequence of nonsingular skew-symmetric $\theta_n$ converging to $\theta$. With the continuity in ii), we obtain that for all $f\in H^\al (\R^d)$,
\[ \norm{u_{\theta}(f)}{}=\lim_n\norm{u_{\theta_n}(f)}{}=\lim_n\norm{\la_{\theta_n}(f)}{}\ge\norm{\la_{\theta}(f) }{}\pl.\]
The last inequality follows from that $\la_{\theta_n}(f)\to \la_\theta(f)$ in strong operator topology, and it is actually an equality (see \cite{Rieffel93}). Since $H^\al(\R^d)$ is $L_1$-norm dense in $\S(\R^d)$, we finish the proof for the even case. When $d=2m-1$ is odd,
we set \[u_\theta(f)= \int_{\R^{d-1}} \hat{f}(\bs)u(\tilde{\theta},(\bs,0))d\bs\pl, \pl f\in \S(\R^{d-1})\pl, \theta\in \A(d-1)\]
where $\tilde{\theta}=\left[\begin{array}{cc}\theta & 0 \\ 0 & 0\end{array}\right]$ is an embedding of $\A(d-1)$ into $\A(d)$. i) and ii) follows similarly. For iii), again we choose a sequence of nonsingular $\tilde{\theta}_n$ approximating $\tilde{\theta}$. Denote $\theta_n$ for corresponding the $(d-1)\times (d-1)$ principal submatrix of $\tilde{\theta}_n$. Then $\theta_n$ converges to $\theta$, and by Lemma \ref{reduced} we obtain that for all $f\in H^\al(\R^{d-1})$,
\[\norm{u_\theta(f)}{}=\lim_n\norm{\int \hat{f}(\bs)u({\tilde{\theta}_n}, (\bs,0))d\bs}{}=\lim_n\norm{\la_{\theta_n}(f)}{}\ge\norm{\la_\theta(f)}{} \pl.\]
We finish the proof.
\end{proof}
\begin{rem}{\rm All the projective unitary representations in Corollary \ref{projrep} can be canonical. For each $\theta$ the $C^*$-algebra generated by the quantization
\[u_\theta(f)=\int_{\R^d}\hat{f}({\bf s})u(\theta,{\bf s})d{\bf s}\pl, \pl f\in \S(\R^d)\]
is isomorphic to $E_\theta$.
}
\end{rem}

\section{The continuous maps of Noncommutative Tori}
Let $(u_1, u_2, \cdots, u_d)$ be a $d$-tuple of unitaries satisfy
\begin{align}u_ju_k=\sigma_{jk}u_ku_j\pl, \pl j ,k=1, 2,\cdots ,d,  \pl \label{atheta}
\end{align}
where $\sigma_{jk}=e^{2\pi i \theta_{jk}}$.
 We say $(u_1, u_2, \cdots, u_d)$ is a canonical $d$-tuple of generators for $A_\theta$ if the canonical map from $A_\theta$ to $C^*(u_1,u_2,\cdots, u_d)$ is a $*$-isomorphism. We denote $\T(d)\cong \T^{\frac{(d-1)d}{2}}$ as the space of all Hermitian $d\times d$ matrices with unit entries. In this section $u$ will denote a $d$-tuple of unitaries $(u_1,u_2, \cdots, u_d)$ and $m$ denote a $d$-tuple of integers $(m_1,m_2,\cdots, m_d)$. We use the standard notation of multiple Fourier series as follows,
\[ u^m=u_1^{m_1}u_2^{m_2}\cdots u_d^{m_d}\pl.\]
 A polynomial in $u$ with finite nonzero coefficients is
\[a= \sum_{m\in \Z^d} \al_mu^m \pl. \]
Denote $\mathcal{P}_\theta$ the $*$-algebra of all polynomials of $(u_1, u_2, \cdots, u_d)$. $A_\theta$ is the enveloping $C^*$-algebra of $\mathcal{P}_\theta$. One can define a faithful tracial state $\tau$ on $\mathcal{P}_\theta$,
 \[\tau(\sum_{m\in \Z^d}\al_m u^m)=\al_0 \pl.\]
The GNS-representation of $\tau$ is given as follows,
\begin{align}\pi(u^m)\ket{m'}=\exp({2\pi i (-\sum_{1\le j< k\le d} \theta_{jk}m_km^{'}_j)})\ket{m+m'}\pl, \pl \forall\pl m,m'\in \Z^d\pl,
\label{GNS}\end{align}
where we use ``kets'' $\ket{m}$ for the GNS-vector of $u^m$. $\{\ket{m}|m\in \Z^d\}$ forms an orthonormal basis and the Hilbert space is isomorphic to $l_2(\mathbb{Z}^d)$. The trace $\tau$ is implemented by the cyclic vector $\ket{\bf{0}}$,
\[\tau(\sum_{m\in \Z^d}\al_m u^m)=\bra{\bf{0}}\sum_{m\in \Z^d}\al_m \pi(u^m)\ket{\bf{0}}=\al_0\pl.\]
By universality, $\pi$ extends to a $*$-representation of $A_\theta$ and so does the tracial state $\tau$. To see that both $\tau$ and $\pi$ are faithful, we recall the following reformulation of $\tau$ by the transference automorphisms of $A_\theta$. Let $\T^d$ be the $d$-torus
\[\T^d=\{(z_1,z_2,\cdots, z_d) \in \C^d \pl |\pl |z_j|=1 \pl , \pl \forall  j\}\pl.\]
 For a $d$-tuple $z=(z_1,z_2,\cdots, z_d)\in \T^d$,
the transference automorphism associated to $z$ is given by
\[\al_z(u^m)=z^mu^m\equiv z_1^{m_1}z_2^{m_2}\cdots z_d^{m_d}u_1^{m_1}u_2^{m_2}\cdots u_d^{m_d} \pl,\]
and extended to $A_\theta$ by universality. For each $j$, we introduce the following map
\[\Phi_{j}(a)=\int_{\T} \al_{(1,\cdots, z_j, \cdots, 1)} (a) dz_j \pl.\]
As an averaging of automorphisms, $\Phi_{j}$ is faithful, completely positive and contractive. Note that
\[\Phi_{j}(u^m)=\left\{
\begin{aligned}
& u^m\pl  &  \pl \text{if}\pl m_j=0 \\
 & 0\pl  & \pl \text{otherwise} \\
\end{aligned}
\right. \pl .\]
$\Phi_{j}$ is the conditional expectation onto the subalgebra generated by all unitary generators except $u_j$. One can see that $\Phi_j\Phi_k=\Phi_k\Phi_j$, and this composition is the conditional expectation onto the subalgebra generated by all generators except for $u_j$ and $u_k$. Inductively, the map $\Phi_1\Phi_2\cdots \Phi_d$ is the conditional expectation onto the scalers, which coincides with the canonical state $\tau$,
\[\Phi_1\Phi_2\cdots \Phi_d(u^m)=\tau(u^m)I=\left\{
\begin{aligned}
&I &\pl \text{if}\pl \pl m =(0&,0,\cdots, 0) \\
&0 &\pl \text{otherwise}&
\end{aligned}
\right. \pl.\]
This justifies that $\tau$ is faithful and so is the representation $\pi$. 

The following lemma is an analog of Lemma 4.3 in \cite{HR}.
\begin{lemma} \label{trace} Let $(u_1,u_2,\cdots, u_d)$ be a $d$-tuple of unitaries satisfying
\[u_ju_k=e^{2\pi i\theta_{jk}}u_ku_j\pl, \pl j ,k=1, 2,\cdots ,d \pl . \]
Then $(u_1,u_2,\cdots, u_d)$ is a canonical $d$-tuple of generators for $A_\theta$ if and only if there exists a state $\tau$ on $C^*(u_1,u_2,\cdots, u_d)$ such that,
\begin{align}\label{tau}\tau(u^m)=\left\{
\begin{aligned}
& 1 &\pl \text{if} \pl \pl m=(&0,0,\cdots, 0) \\
&0 &\pl \text{otherwise}&
\end{aligned}
\right. \pl.
\end{align}
\end{lemma}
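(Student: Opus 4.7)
The statement is a biconditional. The forward direction is essentially tautological: if the canonical surjection $\pi\colon A_\theta \to C^*(u_1,\dots,u_d)$ (provided by the universal property of $A_\theta$) is a $*$-isomorphism, then pulling back the canonical tracial state $\tau_0$ of $A_\theta$ along $\pi^{-1}$ produces a state on $C^*(u_1,\dots,u_d)$ with exactly the monomial values prescribed by \eqref{tau}, since $\tau_0$ itself has those values by construction.

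For the converse, assume $\tau$ is a state on $B := C^*(u_1,\dots,u_d)$ satisfying \eqref{tau}. The universal property of $A_\theta$ yields a surjective $*$-homomorphism $\pi\colon A_\theta \to B$ sending the canonical generators of $A_\theta$ to $u_1,\dots,u_d$, and the task is to show $\pi$ is injective. The plan is to use $\tau$ to manufacture $\tau_0$ on $A_\theta$ by pullback, then invoke the faithfulness of $\tau_0$ (already established in the discussion above through the composition $\Phi_1 \circ \Phi_2 \circ \cdots \circ \Phi_d$ of faithful conditional expectations) to squeeze $\ker(\pi)$ down to zero. Concretely, I form $\widetilde{\tau} := \tau \circ \pi$, a state on $A_\theta$. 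On any monomial $v^m$ in the canonical generators of $A_\theta$, $\widetilde{\tau}(v^m) = \tau(u^m)$, which equals $1$ if $m=0$ and $0$ otherwise, so $\widetilde{\tau}$ and $\tau_0$ agree on all monomials. By linearity they agree on the $*$-subalgebra $\mathcal{P}_\theta$ of polynomials, and by norm-density of $\mathcal{P}_\theta$ in $A_\theta$ together with continuity of states they agree everywhere, giving $\widetilde{\tau} = \tau_0$.

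For any $a \in \ker(\pi)$ one then has $\tau_0(a^*a) = \widetilde{\tau}(a^*a) = \tau(\pi(a)^*\pi(a)) = 0$, so faithfulness of $\tau_0$ forces $a=0$; hence $\pi$ is a $*$-isomorphism and $(u_1,\dots,u_d)$ is canonical. The only step with any content is the identification $\widetilde{\tau} = \tau_0$, which rests solely on polynomials being dense in $A_\theta$; I do not anticipate a genuine obstacle, since the lemma really just says that a faithful state with prescribed monomial values is rigid enough to detect the universal property.
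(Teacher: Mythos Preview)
Your proof is correct, but it takes a different route from the paper's. The paper argues the converse by constructing an explicit inverse to the canonical surjection $\pi_u\colon A_\theta \to C^*(u_1,\dots,u_d)$: it identifies $A_\theta$ with its faithful GNS image in $B(l_2(\mathbb{Z}^d))$, observes that the GNS representation $\pi_\tau$ of the given state $\tau$ has the same GNS Hilbert space and sends each $u_j$ to the canonical generator $\tilde{u}_j$, and then checks that $\pi_u\pi_\tau$ and $\pi_\tau\pi_u$ fix generators and hence are identities. Your argument instead pulls $\tau$ back along $\pi$ to $A_\theta$, identifies the pullback with the canonical trace $\tau_0$ by density of polynomials, and kills $\ker(\pi)$ using the already-established faithfulness of $\tau_0$. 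Your approach is arguably more economical, since it avoids revisiting the GNS construction and uses only the faithfulness of $\tau_0$ (which the paper has just proved via the averaging maps $\Phi_j$); the paper's approach has the minor advantage of exhibiting the inverse isomorphism concretely as a GNS map. Both are short and valid.
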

\begin{proof} The necessity follows from the above discussion. Let us identify $A_\theta$ with the representation $\pi(A_\theta)\subset B(l_2(\Z^d))$ in \eqref{GNS}. Given a state $\tau$ as \eqref{tau},  the GNS-representation $\pi_\tau$ maps $C^*(u_1,u_2,\cdots, u_d)$ into $B(l_2(\Z^d))$ and sends each $u_j$ to the canonical unitary $\tilde{u_j}\in A_\theta$. Denote $\pi_u$ for the canonical map from $A_\theta$ onto $C^*(u_1,u_2,\cdots, u_d)$. Both compositions $\pi_u\pi_\tau$ and $\pi_\tau\pi_u$ are the identity maps, since they send generator unitaries to generator unitaries. Therefore the canonical $\pi_u$ is a $*$-isomorphism.
\end{proof}
The next theorem is a refinement of Theorem \ref{p} with periodicity.
\begin{theorem}
Let $H$ be an infinite dimensional Hilbert space. There exist $d$ continuous maps $u_1,u_2, \cdots, u_d: \T(d) \to U(H)$ and a universal constant $C>0$ such that \begin{enumerate}
 \item[i)]for $\si$ such that $\si_{jk}=e^{2\pi i\theta_{jk}}$, $(u_1(\si),u_2(\si), \cdots, u_d(\si))$ is a canonical $d$-tuple of generators for $A_\theta$;
 \item[ii)] for each $j$,
\[ \frac12\max_{k}|\si_{jk}-\si'_{jk}|^{\frac{1}{2}}\le \norm{u_j(\theta)-u_j(\theta')}{}\le C(\sum_{ k }|\si_{jk}-\si'_{jk}|^{\frac{1}{2}})\pl. \]
\end{enumerate}
\end{theorem}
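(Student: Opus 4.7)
The plan is to transcribe the tensor-product recipe \eqref{tensor} of Theorem \ref{p} into the compact parameter space $\T(d)$, exploiting that the Haagerup--R{\o}rdam paths $u,v:[0,1]\to U(H_0)$ satisfy $u(0)=u(1)$ and $v(0)=v(1)$ and so descend to continuous maps $u,v:\T\to U(H_0)$ in $\sigma=e^{2\pi i\theta}$. Index the tensor slots of $H=H_0^{\otimes d(d-1)/2}$ by unordered pairs $\{j,k\}$, and define $u_j(\sigma)$ by placing $u(\sigma_{jk})$ in the $\{j,k\}$-slot when $j<k$, $v(\sigma_{jk})$ when $j>k$, and the identity in the remaining slots. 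The only shared non-trivial slot between $u_j(\sigma)$ and $u_k(\sigma)$ is $\{j,k\}$, and on it the HR Weyl relation $uv=\sigma_{jk}vu$ fires, producing $u_j(\sigma)u_k(\sigma)=\sigma_{jk}u_k(\sigma)u_j(\sigma)$ slot by slot.

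For canonicity in (i), I would invoke Lemma \ref{trace}. The HR pair canonically generates $A_{\theta_{jk}}$ and thus carries its canonical rotation-algebra trace $\tau_{jk}$ with $\tau_{jk}(u^pv^q)=\delta_{p,0}\delta_{q,0}$. Set $\tau:=\bigotimes_{j<k}\tau_{jk}$ on the spatial tensor product $\bigotimes_{j<k}A_{\theta_{jk}}\subset B(H)$, which contains all the $u_j(\sigma)$. The monomial $u^m=u_1(\sigma)^{m_1}\cdots u_d(\sigma)^{m_d}$ factors slot by slot into $u(\sigma_{jk})^{m_j}v(\sigma_{jk})^{m_k}$ on $\{j,k\}$, and $\tau_{jk}$ annihilates this unless $m_j=m_k=0$; multiplying over all $\binom{d}{2}$ slots collapses the result to $\delta_{m,\mathbf 0}$, so Lemma \ref{trace} gives canonicity.

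The upper bound in (ii) is routine bookkeeping. The comparison $|e^{2\pi i\theta}-e^{2\pi i\theta'}|\asymp|\theta-\theta'|$ on short arcs promotes the HR Lip$^{1/2}$ bound to $\|u(\sigma)-u(\sigma')\|\le C'|\sigma-\sigma'|^{1/2}$ (and analogously for $v$); telescoping $u_j(\sigma)-u_j(\sigma')$ across its $d-1$ non-trivial tensor factors and invoking the triangle inequality finishes this half.

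The lower bound is the heart of the matter. Its starting point is the HR single-slot estimate $\|u(\sigma)-u(\sigma')\|\ge c|\sigma-\sigma'|^{1/2}$, which follows from iterating $u^nv^nu^{-n}v^{-n}=\sigma^{n^2}I$ and optimizing $n\asymp|\sigma-\sigma'|^{-1/2}$. To lift this to the $d$-tuple, I would fix $k^*$ achieving the max on the left-hand side of (ii) and decompose $u_j(\sigma)=U\otimes X$, where $U$ is the $(j,k^*)$-slot factor and $X$ is the product over the remaining slots (similarly $u_j(\sigma')=U'\otimes X'$). Apply a contractive slice $\mathrm{id}\otimes\omega$, where $\omega=\bigotimes_{k\neq k^*}\omega_k$ is a product vector state built from common approximate $+1$-eigenvectors of each pair $(f_k(\sigma_{jk}),f_k(\sigma'_{jk}))$; such vectors exist because by Proposition \ref{eq} the corresponding self-adjoint generators have bounded difference of norm $O(|\sigma_{jk}-\sigma'_{jk}|^{1/2})$ and the HR generators have $0$ in their essential spectrum, so their joint approximate kernels near $0$ overlap. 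Under this slice $u_j(\sigma)-u_j(\sigma')$ reduces to a scalar (of modulus arbitrarily close to $1$) times $U-U'$, plus a controllable error of order $\sum_{k\neq k^*}|\sigma_{jk}-\sigma'_{jk}|^{1/2}$; combining with the HR single-slot lower bound on $\|U-U'\|$ and using the maximality of $|\sigma_{jk^*}-\sigma'_{jk^*}|^{1/2}$ to absorb the error terms produces the estimate. The main obstacle is the uniform constant bookkeeping needed to land at the clean factor $\tfrac12$.
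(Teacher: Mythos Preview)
Your construction of the maps and the upper estimate match the paper's approach exactly: the same tensor recipe as in \eqref{tensor}, plus the triangle inequality along the non-trivial slots.

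For canonicity you take a different (and arguably cleaner) route than the paper. The paper argues by density: the set of $\theta$ with $\theta\Z^d\cap\Z^d=\{\mathbf 0\}$ is dense, for such $\theta$ the algebra $A_\theta$ is simple so every representation is canonical, and then one passes to the limit via the argument of Remark~5.6 in \cite{HR} combined with Lemma~\ref{trace}. Your direct tensor-trace verification of the hypothesis of Lemma~\ref{trace} avoids the limiting step entirely; note however that it still rests on the $d=2$ canonicity of the HR pairs, which in \cite{HR} is itself proved by the density-of-simple-points method.

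The lower bound is where you go astray. The paper does not use the tensor structure at all here: it simply invokes Proposition~4.6 of \cite{HR} for each pair $(j,k)$. That proposition is a general commutator inequality, valid for \emph{any} unitaries satisfying $u_ju_k=\sigma_{jk}u_ku_j$ and $u_j'u_k'=\sigma_{jk}'u_k'u_j'$ on a Hilbert space, and it delivers the $\tfrac12|\sigma_{jk}-\sigma_{jk}'|^{1/2}$ bound directly. Your slicing argument is therefore unnecessary, and it also has real gaps: the existence of \emph{common} approximate $+1$-eigenvectors for two distinct unitaries $f_k(\sigma_{jk})$ and $f_k(\sigma_{jk}')$ is not guaranteed merely by each having $1$ in its spectrum, your claim that ``the HR generators have $0$ in their essential spectrum'' is neither stated nor used in \cite{HR} for the rotation-algebra paths, and even granting all that, the error term you produce is of the same order as the main term you are trying to isolate, so ``absorbing'' it with the clean constant $\tfrac12$ is not just bookkeeping. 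Replace the whole paragraph with a one-line citation of \cite[Prop.~4.6]{HR}.
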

\begin{proof} The continuous maps and the upper estimates of ii) can be proved with same construction as in Theorem \ref{ultimate}. The lower estimates follows from Proposition 4.6 in \cite{HR}, for each pair of indices $(j,k)$. To show that $(u_1(\si),u_2(\si), \cdots, u_d(\si))$ is canonical, we recall the fact that $A_\theta$ is simple when $\theta \mathbb{Z}^d \cap \mathbb{Z}^d=\{\textbf{0}\}$ (see \cite{Sl, OTP, Gr}). Such $\theta$'s are dense in all skew-symmetric $d\times d$ matrices. Then the conclusion can be derived by combining the argument of Remark 5.6 in \cite{HR} with Lemma \ref{trace}.
\end{proof}
\begin{rem}{\rm For $\al>0$, let us recall the Soblev space on $d$-torus \[H_\al(\T^d)=\{f \in L_2(\T^d)\pl |\pl \sum_{m\in \Z^d} (1+|m|^2)^\al|\hat{f}(m)|^2<\infty\}\pl, \pl \norm{f}{H^\al}=(\sum_{m\in \Z^d} (1+|m|^2)^\al|\hat{f}(m)|^2)^{\frac12}\]
where $\hat{f}$ is the Fourier series of $f$. Given the $d$ continuous maps
$u_1,u_2, \cdots, u_d$ above, we have the following quantization,
\[u_\si(f):=\sum_m \hat{f}(m_1,m_2,\cdots,m_d)u_1(\si)^{m_1}u_2(\si)^{m_2}\cdots u_d(\si)^{m_d}, \pl \si\in \T(d) \pl. \]
The series on the r.h.s is well defined if $f\in H^\al(\T^d)$ for some $\al>\frac{d}{2}$. We have an analog of Corollary \ref{quan} as follows: for $\al>\frac{d}{2}+1$ there exists constant $C_{\al,d}$ depending on $\al,d$ such that
\[\norm{u_\si(f)-u_{\si'}(f)}{}\le C_{\al,d}\norm{f}{H^\al}\sum_{j,k}|\si_{jk}-\si'_{jk}|^{\frac12}\pl\]
holds for any
$f\in H^\al(\T^d)$ and $\si,\si'\in \T(d)$.}
\end{rem}
Let us define that for each pair $\si, \si' \in \T(d)$,
\[\rho(\si, \si')\equiv \inf \max_j \norm{u_j -u_j'}{} \pl,\]
where the infimum runs over all $d$-tuple of unitaries $(u_1,u_2, \cdots, u_d)$ on the seperable infinite dimensional Hilbert space $H$ satisfying the commutation relation \eqref{atheta} for $\si$, and respectively $(u'_1,u'_2, \cdots, u'_d)$ for $\si'$. It is proved in \cite{HR} that for $d=2$, $\rho$ is a translation invariant metric on $\T$ and
\[\frac12|\si-\si'|^{\frac12}\le \rho(\si, \si')\le 24 |\si-\si'|^\frac{1}{2},\pl \si, \si' \in \T \pl .\]
Their argument generalizes to $d> 2$.
\begin{prop} $\rho$ is a translation-invariant metric on $\T(d)$ and for any $\si, \si'\in \T(d)$,
\begin{align}\frac{1}{2}\max_{j,k}|\si_{jk}-\si_{jk}'|^{\frac12}\le \rho(\si, \si') \le 24(d-1)\max_{j,k} |\si_{jk}-\si_{jk}'|^{\frac12} \pl .\label{final}\end{align}
\end{prop}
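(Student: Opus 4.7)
The plan is to prove the statement in three steps: metric and translation-invariance axioms, upper bound, and lower bound. Both bounds reduce cleanly to the tensor construction of Theorem \ref{p} and the $d=2$ results; the subtle point is the triangle inequality.

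For translation invariance, given realizations $(u_j)$ of $\si$ and $(u'_j)$ of $\si'$ on $H$, and any realization $(w_j)$ of $\tau\in\T(d)$ on a second copy of $H$, the tuples $(u_j\ten w_j)$ and $(u'_j\ten w_j)$ on $H\ten H\cong H$ realize $\si\tau$ and $\si'\tau$ (a direct check of the commutation relations), with $\|u_j\ten w_j-u'_j\ten w_j\|=\|u_j-u'_j\|$. This gives $\rho(\si\tau,\si'\tau)\le\rho(\si,\si')$, with equality by symmetry. Non-negativity, symmetry, and $\rho(\si,\si)=0$ are immediate, and $\rho(\si,\si')=0\Rightarrow\si=\si'$ will follow from the lower bound. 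For the triangle inequality, the plan is to mirror the $d=2$ argument of \cite{HR}: given $\eps>0$ and realizations $(A_j,B_j)$ of $(\si,\si')$ and $(C_j,D_j)$ of $(\si',\si'')$ approaching the respective infima to within $\eps$, amplify to infinite multiplicity via $H\ten\ell_2\cong H$ (which preserves all norm differences), then apply Voiculescu's absorption theorem to conjugate the amplified middle realizations $B_j$ and $C_j$ by a unitary up to arbitrary precision. This places all three realizations on the same Hilbert space, so the operator-norm triangle inequality applies and yields $\rho(\si,\si'')\le\rho(\si,\si')+\rho(\si',\si'')$.

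For the upper bound, apply the tensor construction of Theorem \ref{p}: each $u_j(\si)$ has exactly $d-1$ nontrivial tensor factors, one per index $k\ne j$, each a 2-dimensional realization of the commutation relation with parameter $\si_{jk}$. For every pair $\{j,k\}$ and every $\eps>0$, choose the 2-dimensional factors to achieve the infimum $\rho_2(\si_{jk},\si'_{jk})+\eps$, which by the $d=2$ bound of \cite{HR} is at most $24|\si_{jk}-\si'_{jk}|^{1/2}+\eps$. Using these as tensor factors and the subadditivity of the operator norm across tensor products,
\[\|u_j(\si)-u_j(\si')\|\le\sum_{k\ne j}\rho_2(\si_{jk},\si'_{jk})+(d-1)\eps\le 24(d-1)\max_k|\si_{jk}-\si'_{jk}|^{1/2}+(d-1)\eps.\]
Taking the maximum over $j$ and sending $\eps\to 0$ yields $\rho(\si,\si')\le 24(d-1)\max_{j,k}|\si_{jk}-\si'_{jk}|^{1/2}$.

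For the lower bound, fix any realizations $(u_j)$ of $\si$ and $(u'_j)$ of $\si'$ on $H$. For each pair $(j,k)$, the subpair $(u_j,u_k)$ satisfies the 2-dimensional commutation relation for $\si_{jk}$, and likewise $(u'_j,u'_k)$ for $\si'_{jk}$. Proposition 4.6 of \cite{HR} applied to each such pair gives $\max(\|u_j-u'_j\|,\|u_k-u'_k\|)\ge\frac12|\si_{jk}-\si'_{jk}|^{1/2}$. Taking the maximum over $l$ on the left and over $(j,k)$ on the right, then taking the infimum over realizations, yields $\rho(\si,\si')\ge\frac12\max_{j,k}|\si_{jk}-\si'_{jk}|^{1/2}$, which also justifies the indiscernibility axiom used above. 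The hardest step will be the triangle inequality, since it demands that two a priori unrelated infimum-achieving pairs of realizations be reconciled on a common Hilbert space, which is precisely where the amplification and Voiculescu absorption step enters.
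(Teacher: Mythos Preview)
Your translation-invariance, lower bound, and upper bound are all correct. The upper bound in the paper is actually done differently: rather than the tensor construction of Theorem~\ref{p}, the paper invokes the unbounded-generator perturbation Theorem~\ref{boundedd} (using translation invariance to make $\theta$ nonsingular), obtains $\|P_j-P_j'\|\le 9(d-1)\max_k|\theta_{jk}-\theta'_{jk}|^{1/2}$, and exponentiates at $t=\sqrt{2\pi}$. Your tensor-product route is a valid and arguably more transparent alternative for obtaining the same constant $24(d-1)$.

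There is, however, a genuine gap in your triangle-inequality argument. You propose to take arbitrary near-optimal realizations $(B_j)$ and $(C_j)$ of the \emph{middle} parameter $\si'$, amplify to infinite multiplicity, and invoke Voiculescu to make them approximately unitarily equivalent. Amplification indeed kills the compact obstruction, but Voiculescu's theorem requires the two induced representations of $A_{\theta'}$ to have the \emph{same kernel}. For a generic $\si'$ the algebra $A_{\theta'}$ is not simple, so two $d$-tuples satisfying the same relations may generate non-isomorphic quotients (e.g.\ for $\si'=1$ one could take all $u_j=1$ versus a faithful copy of $C(\T^d)$); such representations are never approximately unitarily equivalent, no matter how much you amplify. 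The paper avoids this by first using the translation invariance you already proved: it suffices to verify $\rho(\si',\si'')\le\rho(\si',\si)+\rho(\si,\si'')$ for a single fixed middle vertex $\si$, and one chooses $\si=e^{2\pi i\theta}$ with $\theta\Z^d\cap\Z^d=\{0\}$ so that $A_\theta$ is simple. Then every nonzero representation of $A_\theta$ is automatically faithful, the Voiculescu hypothesis is met, and your reconciliation step goes through. Once you insert this reduction, your outline becomes a complete proof.
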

\begin{proof} We first show the translation-invariance. Given $\si,\si',\si''\in \T(d)$, let $(u_1, u_2,\cdots u_d)$, $(u_1', u_2',\cdots u_d')$ and $(u_1'', u_2'',\cdots u_d'')$ be $d$ tuples of unitaries on $H$ satisfying
\[u_ju_k=\si_{jk}u_ku_j \pl, \pl u_j'u_k'=\si_{jk}'u_k'u_j' \pl, \pl u_j''u_k''=\si_{jk}''u_k''u_j''\pl, \pl j,k=1,\cdots, d.\]
Define the new unitaries
\[v_j=u_j\ten u_j'' \pl,  \pl v_j'=u_j\ten u_j'', \pl ,j=1,\cdots,k\]
on $H\ten_2 H\cong H$, they satisfy
\[v_jv_k=\si_{j,k}\si''_{j,k}v_kv_j \pl , \pl v_j'v_k'=\si'_{j,k}\si''_{j,k}v_k'v_j'
\pl .\]
Since $\norm{v_j-v_j'}{}=\norm{u_j-u_j'}{}$ for all $j$, we have $\rho(\si,\si') \le \rho(\si\si'',\si'\si'')$. Thus the translation invariance follows by symmetry.

With the translation-invariance, it is sufficient to prove the triangle inequality
\begin{align}\rho(\si', \si'')\le \rho(\si', \si)+\rho(\si, \si'') \label{tri} \end{align}
for all triple $(\si,\si',\si'')$ with a fixed $\si$. Indeed for any $\eta, \si,\si',\si''\in \T(d)$, the triangle inequalities \eqref{tri} for $(\si',\si,\si'')$ and $(\si'\eta,\si\eta,\si''\eta)$ are equivalent. Choosing $\theta \in \A(d)$ such that $\theta\Z^d\cup \Z^d=\{{\bf 0}\}$, then $A_\theta$ is simple. We claim that any two $d$-tuples of unitaries
$(u_1,u_2,\cdots, u_d)$ and $(v_1,v_2,\cdots, v_d)$ on $H$ satisfying the commutation relations of $A_\theta$ are approximately unitarily equivalent, i.e. there exists a sequence $\{w_n\}$ of unitaries on $H$ such that for all $j$
\[\norm{w_nu_jw_n^*-v_j}{}\rightarrow 0 \pl.\]
This can be shown, as in Proposition 4.2 of \cite{HR}, by Voiculescu's noncommutative Weyl-von Neumann Theorem \cite{Voiculescu}. Consider the two canonical $*$-homomrophisms $\pi_u,\pi_v: A_\theta \to B(H)$,
\[ \pi_u(\tilde{u}_j)=u_j \pl , \pl \pi_v(\tilde{u}_j)=v_j\pl , \pl j=1,\cdots,d,\]
where $\tilde{u}_j$'s represent the generators of $A_\theta$. Denote by $\K$ the ideal of compact operators on $H$. We need to verify that $\pi_u^{-1} (\K)\subset \ker \pi_u$ and $\pi_u^{-1} (\K)\subset \ker \pi_v$. $\pi_u^{-1} (\K)$ and $\pi_u^{-1} (\K)$ are proper ideals in $A_\theta$, and hence both are trivial because $A_\theta$ is simple.

Now choose $\si$ such that $e^{2\pi i \theta_{jk}}=\si_{jk}$. For any $\si'$ and $\si''$, find $d$-tuples $(u_1,\cdots, u_d)$ and $(u_1',\cdots, u_d')$ of unitaries on $H$ such that
\[u_ju_k=\si_{jk}u_ku_j \pl , \pl u'_ju'_k=\si'_{jk}u'_ku'_j, \pl \text{and}\pl\pl \pl \max_j \norm{u_j-u_j'}{}\le \rho(\si,\si')+\frac{\epsilon}{2}\pl , \]
 and also $(v_1,\cdots, v_d)$ and $(v_1'',\cdots, v_d'')$ such that
 \[v_jv_k=\si_{jk}v_kv_j \pl , \pl v''_jv''_k=\si''_{jk}v''_kv''_j, \pl \text{and} \pl\pl\pl \max_j \norm{v_j-v_j''}{}\le \rho(\si,\si'')+\frac{\epsilon}{2}\pl .\]
Since $(u_1,\cdots, u_d)$ and $(v_1,\cdots, v_d)$ are approximately unitarily equivalent, there exists a unitary $w$ on $H$ such that
\[\max_j \norm{wu_jw^*-v_j}{}\le \epsilon .\]
Then take $\bar{u_j}=wu_j'w^*$, we have
\begin{align*}\rho(\si',\si'') &\le \max_j \norm{\bar{u_j}-v_j''}{}\le \max_j (\norm{wu_j'w^*-wu_jw^*}{} +\norm{wu_jw^*-v_j}{}+ \norm{v_j-v_j''}{})\\& \le \max_j \norm{wu_j'w^*-wu_jw^*}{} +\max_j \norm{wu_jw^*-v_j}{}+ \max_j\norm{v_j-v_j''}{}\\& \le \rho(\si',\si)+\rho(\si,\si'')+2\epsilon \pl.\end{align*}
Therefore we prove the triangle inequality.

Finally, the first inequality of \eqref{final} is a direct consequence of Proposition 4.6 of \cite{HR}. On the other hand, let $\theta,\theta'\in \A(d)$ (we may assume $\theta$ nonsingular by translation invariance) such that $\si_{jk}=e^{2\pi i \theta_{jk}}$ and $\si'_{jk}=e^{2\pi i \theta'_{jk}}$. We have $P_1,P_2,\cdots, P_d$ and $P'_1,P'_2,\cdots, P'_d$ be the self-adjoint operators from Theorem \ref{boundedd}. The second inequality follows from choosing
\[u_j(t)=e^{\sqrt{2\pi}iP_jt}\pl , \pl u'_j(t)=e^{\sqrt{2\pi}iP'_jt}\pl, \pl j= 1, \cdots ,d \pl.\]
\end{proof}

\noindent\emph{Acknowledgements}---I thank my advisor Marius Junge for suggestions and guidance on this research project.
I am grateful to Florin P. Boca for passing me the problem and helpful discussions, and to Stephen J. Longfield for numerous corrections.


\section{Appendix: Projective unitary representations and Twisted group $C^*$-algebras}
In this appendix, we provide an argument for the universality of
the noncommutative Euclidean space $E_\theta$ defined in Section 3. One can identify $E_\theta$ as a \emph{twisted group $C^*$-algebra} and recall its natural connection to projective unitary representation. We refer to the survey \cite{Mackey} for more information about this topic.

Let $G$ be a locally compact Hausdorff group and $e$ be the identity of $G$. A strongly continuous map $u: G \to U(H)$ is a \emph{projective unitary representation} if there exists a (continuous) function $\si: G\times G \to \T$ such that
\[u(g)u(h)=\si(g,h)u(gh)\pl ,\pl g,h\in G \pl.\]
The function $\si$ is called the multiplier associated to $u$ and $u$ is called a $\si$-representation. It follows from the group structure that for all $g,g_1,g_2\in G$
\begin{enumerate}
\item[i)]$\si(g,e)=\si(e,g)=1$;
\item[ii)]$\si(g,g_1)\si(gg_1,g_2)=\si(g,g_1g_2)\si(g_1,g_2)\pl.$
\end{enumerate}
A function $\si: G\times G \to \T$ satisfying i) and ii) is called a $2$-cocycle of $G$ with values in $\T$.

Given a $\T$-valued $2$-cocycle $\si$ of $G$, the Banach $*$-algebra $L_1(G, \si)$ is defined as the set $L_1(G)$ equipped with the $\si$-twisted convolution and involution given by
\[f_1*_\si f_2(g)=\int_G f_1(g_1)f_2(g_1^{-1}g)\si(g_1,g_1^{-1}g)d\mu(g_1) \]
where $d\mu$ is the (left) Haar measure on $G$,
and $f^*(g)=\overline{\si(g,g^{-1})}f(g^{-1})$. $L_1(G, \si)$ can be represented on $L_2(G,\mu)$ as follows,
\[\la_\si(f) (h)= f*_\si h \pl ,  f\in L_1(G), h\in L_2(G) \pl.\]
This is called the left $\si$-regular representation of $G$.
The reduced $\si$-twisted group $C^*$-algebra, denoted by $C^*_r(G,\si)$, is the norm closure of $L_1(G,\si)$ in $B(L_2(G))$. The full $\si$-twisted group $C^*$-algebra $C^*(G,\si)$ is defined as the enveloping $C^*$-algebra of $L_1(G,\si)$. There is an one-to-one correspondence between $\si$-representations of $G$ and representations of $C^*(G,\si)$. If $G$ is amenable, $C^*(G,\si)$ is isomorphic to $C^*_r(G,\si)$ and the left $\si$-regular representation of $C^*(G,\si)$ on $L_2(G,\si)$ is faithful.

Back to the noncommutative Euclidean space $E_\theta$, a symplectic bilinear form $\theta$ introduces a $2$-cocycle of $\R^d$ as follows
\[\si_\theta(\bs,\bt)=\exp(\frac{i}{2}\theta(\bs,\bt)) \pl, \pl \bs,\bt\in \R^d \pl.\]
The Moyal product $\star_\theta$ is the Fourier transform of $\si$-twisted convolution. One identifies $E_\theta=C^*_r(\R^d, \si_\theta)$ and it is further isomorphic to $C^*(\R^d, \si_\theta)$ because $\R^d$ is amenable. Thus there is an one-to-one correspondence between $*$-homomorphism from $E_\theta$ and $\theta$-representation of $\R^d$. One can use an alternative argument by identifying $E_\theta$ with an iterated crossed product $C_0(\R)\rtimes \R\rtimes \cdots \rtimes \R$, which uses the amenablity of $\R$.

\end{document}